\newtheorem{thm}{{\rm T\sc heorem}}[section]
\newtheorem{cor}[thm]{\rm C\sc orollary}
\newtheorem{lem}[thm]{\rm L\sc emma} 
\newtheorem{prop}[thm]{\rm P\sc roposition}
\newtheorem*{con}{\rm C\sc onjecture}
\theoremstyle{definition}
\newtheorem{defn}[thm]{{\rm D\sc efinition}}
\newtheorem{rmk}[thm]{{\rm R\sc emark}}
\newtheorem{ex}[thm]{\rm E\sc xample}
\newtheorem*{acks}{Acknowledgments}
\newtheorem*{notation}{Notation}
\begin{document}
\title {A nonspecial divisor in the moduli space of cubic fourfolds via 10-nodal plane sextics}
\author[Elena Sammarco]{Elena Sammarco}
\address{Dipartimento di Matematica e Fisica \\ 
Università degli studi Roma Tre}
\email{elena.sammarco@uniroma3.it}

\begin{abstract}
    In the moduli space $\mathcal C$ of complex cubic hypersurfaces $X\subset\mathbb P^5$, we study the condition that $X$ admits a net of polar quadrics whose discriminant locus is a $10$-nodal irreducible plane sextic curve. Our main result is that such a condition defines an irreducible divisor in $\mathcal C$ which is not of Noether-Lefschetz type.
\end{abstract}

\maketitle

\pagestyle{myheadings}\markboth{\textsc{Elena Sammarco}}{\textsc{A nonspecial divisor in the moduli space of cubic fourfolds via 10-nodal plane sextics}}

\section{Introduction}

    The study of the Hodge loci in the moduli space of complex cubic hypersurfaces of $\mathbb P^n$ is certainly a fundamental theme when addressing these varieties and their related problems (see \cite{Voisin13}). This is true, in particular, for the moduli space $\mathcal C$ of smooth cubic fourfolds $X\subset\mathbb P^5$; the study in $\mathcal C$ of the geography of its Noether-Lefschetz divisors, and their description, is a central issue. We recall from \cite{Hassett00} that:

    \begin{defn}
        A cubic fourfold $X$ is \textit{special} if it admits an algebraic cycle of codimension 2 not homologous to a complete intersection.
    \end{defn}

    This is equivalent to say that $X$ is not special if and only if its Néron-Severi lattice $H^{2,2}(X,\mathbb{Z})$ is generated only by the class of $h^2$, where $h=c_1(\mathcal{O}_X(1)) \in {\rm Pic } \ X$. It is well known, for example it follows from \cite{Voisin86}, that a very general cubic fourfold $X$ is not special. Moreover, the family of special cubic fourfolds is a countable union of irreducible divisors $\mathcal{C}_d$, $d\in\mathbb{Z}_{\geq 0}$, in the moduli space $\mathcal{C}$. Here $d$ is the discriminant of the lattice $H^{2,2}(X,\mathbb{Z})$, which has rank 2 and constant discriminant $d \ \equiv \ 0 \ \text{or} \ 2\mod 6$ for a general $X$ in $\mathcal C_d$. The divisors $\mathcal C_d$ are called \textit{special} or of \textit{Noether-Lefschetz type}. The Hodge locus of $\mathcal C$ is their union. A special cubic fourfold $X$ often contains some unexpected and beautiful families of algebraic surfaces, not homologous to a complete intersection and useful to describe it geometrically. Moreover, the family of divisors $\mathcal C_d$ conjecturally includes the locus in $\mathcal C$ parametrizing rational cubic fourfolds: this is predicted by a conjecture due to Kuznetsov.

    \begin{con}[Kuznetsov]
        A cubic fourfold $X$ is rational if and only if $[X]$ belongs to some special divisor $\mathcal{C}_d$, with $d$ admissible.
    \end{con}

    The admissibility of $d$ is a very precise numerical condition: $d$ is not divisible by 4, 9 or any odd prime congruent to 2 modulo 3. Now, coming to the rationality problem for $X$, Katzarkov, Kontsevich, Pantev and Yu in their preprint \cite{KKPY25} proved that the very general cubic fourfold is irrational applying their new theory of Hodge atoms. Nevertheless, to detect and describe in $\mathcal C$ the loci of rational cubic fourfolds remains a difficult problem, governed by Kutzentsov's conjecture. Also, as of today, no explicit cubic fourfold is known to be irrational.

    It is implicit, in the previous discussion, the complementary convenience of  investigating other irreducible divisors in $\mathcal C$. These should be nonspecial, but of special interest in order to further understand the birational geometry of a cubic fourfold $X$ and the properties of the moduli space $\mathcal C$. In particular, to our knowledge, only three nonspecial divisors are known and described in the literature and they are due to Ranestad and Voisin \cite{RanestadVoisin17} and to Addington and Auel \cite{AddingtonAuel18}.

    In this work we construct an integral divisor in $\mathcal{C}$, that we call \textit{Severi divisor}. To realize it we use the theory of polar hypersurfaces, the classical Dixon's lemma, generalized to nodal plane curves, and the geometry of some Severi varieties of nodal plane curves. The Severi divisor is the locus defined by the family of cubic fourfolds $X$ admitting a net of polar quadrics whose discriminant is an integral 10-nodal plane sextic. Equivalently, the Hessian hypersurface of $X$ admits an integral $10$-nodal plane section. Then, we prove the following theorem.

    \begin{thm}
        The Severi divisor is not a Noether-Lefschetz divisor.
    \end{thm}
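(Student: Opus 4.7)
\medskip

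\noindent\textbf{Proof plan.} The Severi divisor $\mathcal{S}$ is irreducible and the Noether-Lefschetz locus is the countable union $\bigcup_{d}\mathcal{C}_{d}$ of irreducible divisors; by a Baire-type argument it suffices to show $\mathcal{S}\nsubseteq\mathcal{C}_{d}$ for every $d$, equivalently, to exhibit at least one $[X]\in\mathcal{S}$ with $H^{2,2}(X,\mathbb{Z})=\mathbb{Z}h^{2}$. The strategy I would follow is to produce the canonical algebraic cycle on $X$ arising from the polar-net datum and then to show that its class is a multiple of $h^{2}$, so that no new integral Hodge class is generated by the construction.

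Concretely, given a general $[X]\in\mathcal{S}$ with polar net $N\cong\mathbb{P}^{2}$ and $10$-nodal discriminant sextic $\Sigma\subset N$, I would analyse the incidence variety of singular points of quadrics in $N$, distinguishing the generic (rank-$5$) points of $\Sigma$, where the vertex is a single point, from the $10$ nodes of $\Sigma$, where the polar has rank $\leq 4$ and the vertex is a line. Dixon's lemma, cited in the introduction and generalised to the stable curve $\Sigma$, controls precisely this jumping of the vertex dimension. The image of this incidence in $\mathbb{P}^{5}$, suitably intersected with $X$, yields a natural surface $S_{X}\subset X$; its class $[S_{X}]\in H^{4}(X,\mathbb{Z})$ can be computed by Chern-class arithmetic on the polar bundle over $N$, pushed forward to $\mathbb{P}^{5}$ and restricted to $X$.

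If, as expected, the computation yields $[S_{X}]\in\mathbb{Z}\cdot h^{2}$, then the conclusion follows from a standard variational Hodge theory argument. Any inclusion $\mathcal{S}\subseteq\mathcal{C}_{d}$ would force on every $[X]\in\mathcal{S}$ an additional algebraic class deforming flatly along $\mathcal{S}$; but the only cycle canonically produced by the Severi datum is $S_{X}$, which by hypothesis contributes nothing beyond $h^{2}$, and so no such extra class can exist for general $[X]$.

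The principal obstacle is that last implication: ruling out a \emph{hidden} Hodge class invisible to the polar-net data. Depending on the technical setup, this is handled either by a direct monodromy/infinitesimal Torelli computation on the family $\mathcal{X}\to\mathcal{S}$, or, more concretely, by degenerating to a specific $[X_{0}]\in\mathcal{S}$ (for instance a sufficiently generic cubic containing a prescribed polar configuration) whose integral middle cohomology can be computed directly and shown to equal $\mathbb{Z}h^{2}$. The existence of even a single such $[X_{0}]$ immediately rules out $\mathcal{S}\subseteq\mathcal{C}_{d}$ for every $d$, closing the argument.
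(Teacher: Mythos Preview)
Your reduction via Baire/irreducibility to exhibiting a single $[X]\in\mathcal{D}_{sn}$ with $H^{2,2}(X,\mathbb{Z})=\mathbb{Z}h^{2}$ is logically sound, but the method you propose for producing such an $X$ does not work. Computing the class of the canonical surface $S_{X}$ arising from the polar-net data and checking $[S_{X}]\in\mathbb{Z}h^{2}$ says nothing about $H^{2,2}(X,\mathbb{Z})$ as a whole: it only shows that this \emph{one} cycle is homologous to a complete intersection, not that every algebraic $2$-cycle is. You recognise this yourself, but neither proposed remedy closes the gap. The monodromy route is left entirely unspecified. The degeneration route is circular: verifying $H^{2,2}(X_{0},\mathbb{Z})=\mathbb{Z}h^{2}$ for a \emph{specific} cubic fourfold is, in practice, as hard as the theorem itself, since non-speciality is a \emph{very general} condition and there is no known direct way to certify it for an explicit cubic. (The only explicit cubic the paper places in $\mathcal{D}_{sn}$ is the Fermat, which is highly special.)

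The paper avoids this obstacle altogether by a completely different argument. It invokes the fact, from the general theory of Hodge loci, that in a local universal family of deformations every Noether--Lefschetz divisor has smooth normalisation (equivalently, smooth local branches). It then works at the Fermat cubic $X_{F}$ and exhibits the $20$ linearly independent pencils $F_{ijk}(t)=F_{F}+t\,x_{i}x_{j}x_{k}$, transverse to the $Gl(V_{6})$-orbit of $F_{F}$, each of which is shown to be \emph{tangent} to the Severi divisor at $X_{F}$. The tangency is established by an explicit computation of the discriminant sextic of a fixed polar net along each pencil, combined with a lemma describing the tangent space of $V^{irr}_{6,10}(\mathbb{P}^{2})$ at a configuration of six general lines. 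Since a single local branch of $\mathcal{D}_{sn}$ then has a $20$-dimensional tangent space but dimension $19$, it is singular at $X_{F}$, and hence $\mathcal{D}_{sn}$ cannot be a Noether--Lefschetz divisor.

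A minor factual correction to your incidence analysis: the nodes of the discriminant sextic $\Delta_{N}$ are not all rank-$\leq 4$ points. A node of $\Delta_{N}=N\cap\Delta_{5}$ arises either when $N$ meets $\Delta_{4}=\mathrm{Sing}(\Delta_{5})$ \emph{or} when $N$ is tangent to the smooth locus of $\Delta_{5}$; in the latter case the node parametrises a rank-$5$ quadric.
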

    
    The proof of the theorem follows the idea developed in \cite{RanestadVoisin17} and is based on the definition and the properties of the Hodge loci, as described in \cite{Voisin13}. All the constructions contained in this work are performed on the base field $\mathbb{C}$. This paper is based on the author's PhD thesis \cite{Sammarco25}.

    \begin{acks}
        I want to thank my PhD thesis advisor Alessandro Verra for many fruitful conversations and geometrical discussions having inspired this paper.
    \end{acks}

\section{The Dixon's lemma}\label{thedixonlemma}

\subsection{Theta characteristics on smooth irreducible curves}\label{thetacharacteristics}

    In this section we introduce the notion of theta characteristics and some of their properties which will be useful for the main results of this paper. In particular, we start with smooth irreducible curves.

    \begin{defn}
        Let $C$ be a smooth irreducible curve and $\omega_C$ its canonical sheaf. A \textit{theta characteristic} on $C$ is a line bundle $\theta$ on $C$ such that $\theta\otimes \theta=\theta^{\otimes 2}\cong\omega_C$. A theta characteristic is said to be \textit{even} (respectively, \textit{odd}) if $h^0(C,\theta)\equiv 0 \ (\mathrm{mod} \ 2)$ (respectively, $h^0(C,\theta)\equiv 1 \ (\mathrm{mod} \ 2)$). The pair $(C,\theta)$ is referred to as a \textit{smooth spin curve}.
    \end{defn}

    A theta characteristic is, then, the square root of the canonical line bundle on the curve $C$. Let $g=g(C)$ denote the genus of $C$: then $\theta\in\mathrm{Pic}^{g-1}(C)$.

    \begin{rmk}
        It is a classical result that the number of non isomorphic theta characteristics on a fixed curve $C$ is equal to $2^{2g}$. More precisely, there are $2^{g-1}(2^g+1)$ even and $2^{g-1}(2^g-1)$ odd theta characteristics.
    \end{rmk}

    Let $\mathcal{S}_g$ be the moduli space of smooth spin curves of genus $g\geq 2$. It has been classically known that the natural map $\chi:\mathcal{S}_g\longrightarrow\mathcal{M}_g$, viewed as a morphism of Deligne-Mumford stacks, where $\mathcal{M}_g$ is the moduli space of smooth curves of genus $g\geq 2$, is étale and finite of degree $2^{2g}$. Mumford \cite{Mumford71} and Atiyah \cite{Atiyah71} proved that the parity of a spin curve is locally constant in families. As a consequence, the moduli space $\mathcal{S}_g$ splits into the disjoint union of two connected components $\mathcal{S}_g^-$ and $\mathcal{S}_g^+$, where the former parametrizes odd theta characteristic and the latter the even ones. Then there are the two restriction maps $\chi_-:\mathcal{S}_g^-\longrightarrow\mathcal{M}_g$ and $\chi_+:\mathcal{S}_g^+\longrightarrow\mathcal{M}_g$, both étale covers respectively of degree $2^{g-1}(2^g-1)$ and $2^{g-1}(2^g+1)$.

    \begin{defn}\label{smoothplanespincurves}
        From now on, let $\mathcal{P}_d\subset\mathcal{M}_g$ denote the moduli space of smooth plane curves of degree $d$, where $g=\frac{(d-1)(d-2)}{2}$ is the genus of a smooth plane curve of degree $d$. Consider the étale finite cover $\chi:\mathcal{S}_g\longrightarrow\mathcal{M}_g$; then, we can restrict to the moduli space $\mathcal{P}_d$ and consider
        \[
            \mathcal{PS}_d:=\chi^{-1}(\mathcal{P}_d)\subset\mathcal{S}_g.
        \]
        It is the moduli space of smooth plane spin curves of degree $d$. In particular, the restricted map $\chi:\mathcal{PS}_d\longrightarrow\mathcal{P}_d$ is still an étale finite cover of degree $2^{2g}$. We denote by
        \[
            \mathcal{PS}_d^+:=\mathcal{PS}_d\cap\mathcal{S}_g^+
        \]
        the moduli space of smooth plane even spin curves of degree $d$.
    \end{defn}

\subsection{Compactification of $\mathcal{S}_g$}\label{compactificationofspincurves}

    Mumford's approach to the study of theta characteristics (see \cite{Mumford71}) opened up the way to constructing a proper Deligne-Mumford moduli space $\overline{\mathcal{S}}_g$ of stable spin curves, that was carried out by Cornalba in \cite{Cornalba89}. The moduli space $\overline{\mathcal{S}}_g$ compactifies the moduli space ${\mathcal{S}}_g$ of smooth spin curves, over $\mathbb{C}$. Points in the boundary of this compactification correspond to certain line bundles on nodal curves and lie naturally over points in the boundary of $\overline{\mathcal{M}}_g$. Also, there exist $\overline{\mathcal{S}}_g^+$ and $\overline{\mathcal{S}}_g^-$, the irreducible disjoint moduli spaces of even and odd stable spin curves.

    \begin{defn}[from \cite{Cornalba89}]\label{stablespincurve}
        Let $X$ be a quasistable curve of arithmetic genus $g$. A \textit{spin structure} on $X$ is the datum of a line bundle $\theta$ of degree $g-1$ on $X$ and a homomorphism $\beta:\theta^{\otimes 2}\longrightarrow\omega_X$, satisfying the following conditions:
        \begin{itemize}
            \item $\theta$ has degree 1 on every exceptional component $E$ of $X$, that is, $\theta_{|E}=\mathcal{O}_E(1)$;
            \item $\beta$ is not zero at a general point of every nonexceptional component of $X$.
        \end{itemize}
        A \textit{stable spin curve} of genus $g$ is a triple $(X,\theta,\beta)$, where $X$ is a quasistable curve of genus $g$ and $(\theta,\beta)$ is a spin structure on $X$.
    \end{defn}

    In \cite{Cornalba89}, Cornalba proved the following.
    
    \begin{prop}
        Stable spin curves form the moduli space $\overline{\mathcal{S}}_g$, which is a normal and projective variety, equipped with a regular stabilization morphism
        \[
        \chi:\overline{\mathcal{S}}_g\longrightarrow\overline{\mathcal{M}}_g,
        \]
        given by $\chi([X,\theta,\beta]):=[C]$, where $C$ is the stable model of $X$. $\overline{\mathcal{S}}_g$ contains the moduli space $\mathcal{S}_g$ of smooth spin curves as a dense subvariety. The map $\chi$ is a ramified cover of degree $2^{2g}$.
    \end{prop}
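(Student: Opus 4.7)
The plan is to follow Cornalba's construction \cite{Cornalba89}. The starting point is a local analysis of degenerations: given a smooth family $\mathcal{C}^*\to\Delta^*$ of curves over the punctured disk equipped with a theta characteristic $\theta^*$, and a stable limit $C_0$, the line bundle $\theta^*$ in general fails to extend as a square root of the relative dualizing sheaf of the total space. The obstruction is concentrated at those nodes of $C_0$ where the monodromy of $\theta^*$ around the puncture is nontrivial. To resolve it, one blows up each such node, inserting an exceptional component $E\cong\mathbb{P}^1$ on which $\theta|_E=\mathcal{O}_E(1)$; this recovers a well-defined limit $(X_0,\theta_0,\beta_0)$, and motivates Definition \ref{stablespincurve}. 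Reversing the argument, every stable spin curve is obtained as a flat limit of smooth ones by simultaneously smoothing the exceptional components and the surviving non-exceptional nodes, which will give the density of $\mathcal{S}_g$ inside $\overline{\mathcal{S}}_g$.

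Second, I would construct $\overline{\mathcal{S}}_g$ globally. Over the Hilbert scheme of pluricanonically embedded quasistable genus-$g$ curves one forms the relative Picard scheme parametrizing line bundles of total degree $g-1$ having degree one on each exceptional component; the closed condition $\theta^{\otimes 2}\cong\omega$ cuts out a $\mathrm{PGL}$-invariant subscheme whose GIT quotient is the desired moduli space. Projectivity comes directly from the GIT construction. The stabilization morphism $\chi$ is induced by the contraction of the exceptional components in the universal quasistable curve, which in families is a regular morphism onto the universal stable curve over $\overline{\mathcal{M}}_g$.

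Third, I would establish normality and compute the degree. Normality is a local question: at a point $[X,\theta,\beta]$ the formal neighborhood in $\overline{\mathcal{S}}_g$ can be identified with a cover of the formal neighborhood of $[C]$ in $\overline{\mathcal{M}}_g$ that is étale over the interior and admits a square-root branching along each boundary divisor corresponding to a node where the spin structure required an exceptional insertion; both the source and the target are smooth as stacks, so $\overline{\mathcal{S}}_g$ is normal. The degree of $\chi$ is computed on the open locus of smooth curves, where the fiber over $[C]$ is a torsor under $J_C[2]\cong (\mathbb{Z}/2\mathbb{Z})^{2g}$ and therefore has cardinality $2^{2g}$; density of $\mathcal{S}_g$ then propagates this count to the global degree of the ramified cover.

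The main obstacle is the global construction itself: carrying out the GIT analysis correctly, comparing different quasistable models of the same stable curve that arise from different orders of base change, and verifying the valuative criterion of properness for the moduli functor. These technicalities are the core content of Cornalba's paper; once they are in place the stated properties of $\chi$ (regularity, degree, ramification) reduce to natural local computations or to immediate consequences of density.
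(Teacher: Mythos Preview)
The paper does not supply its own proof of this proposition: it is stated verbatim as a result of Cornalba, prefaced by ``In \cite{Cornalba89}, Cornalba proved the following'', and no argument is given. Your proposal is therefore not to be compared against a proof in the paper but against Cornalba's original construction, and what you outline is indeed a faithful high-level sketch of that construction (local limit analysis leading to the insertion of exceptional components, a GIT-type global construction, and the local computation of normality and degree). For the purposes of this paper nothing more than the citation is required.
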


    For a thorough treatment of the boundary of $\overline{\mathcal{S}}_g$ we refer the reader to \cite{Farkas12}.

\subsection{The Dixon's lemma}

    The representation of curves and surfaces of small degree as linear determinants is a classical subject. In 1902 Dixon, in \cite{Dixon02}, posed the problem of expressing the equation of a plane curve of degree $d$ in the form of a symmetric determinant. In particular, he proved that any smooth plane curve $C\subset\mathbb{P}^2$ of degree $d$ can be written as the vanishing of the determinant of a $d\times d$ symmetric matrix whose entries are linear forms in the three variables of $\mathbb{P}^2$. Such a matrix can be seen as a bilinear form, that is, a quadric hypersurface in $\mathbb{P}^{d-1}$; more precisely, since the matrix depends on three parameters, it is a quadric moving in a net of quadrics. Any smooth plane curve $C$ of degree $d$ is then the discriminant curve of a net of quadrics in $\mathbb{P}^{d-1}$, that is, the locus parametrizing the degenerate elements of the net. This result is also known as the \textit{Dixon's lemma}. In \cite{Beauville00} Beauville formalizes this result.

    \begin{thm}[{\cite[Proposition 4.2]{Beauville00}}]\label{minimalresolution}
        Let $C$ be a smooth plane curve of degree $d$, defined by an equation $F=0$, and $\theta$ an even theta characteristic on $C$. Then $\theta$ admits a minimal resolution
        \[
            0\longrightarrow\mathcal{O}_{\mathbb{P}^2}(-2)^d\overset{M}{\longrightarrow}\mathcal{O}_{\mathbb{P}^2}(-1)^d\longrightarrow\theta\longrightarrow 0,
        \]
        where the matrix $M\in \mathbf{M}_{d\times d}(\mathbb{C}[x_0,x_1,x_2])$ is symmetric, linear and $\mathrm{det}M=F$. Conversely, the cokernel of such a symmetric matrix $M$ is a theta characteristic $\theta$ on $C$ with $h^0(C,\theta)=0$.
    \end{thm}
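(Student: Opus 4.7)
The plan is to pin down the shape of the minimal free resolution of $\theta$ (viewed as a coherent sheaf on $\mathbb{P}^2$ via $C\hookrightarrow\mathbb{P}^2$) using cohomology, and then to exploit the self-duality $\theta \cong \theta^{-1}\otimes\omega_C$ coming from $\theta^{\otimes 2}\cong\omega_C$ to upgrade the presentation matrix to a symmetric one.

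First I would compute the cohomology of the twists $\theta(n)$. Riemann--Roch on $C$ gives $\chi(\theta(n)) = nd$, and Serre duality combined with the theta-characteristic condition yields $h^1(\theta(n)) = h^0(\theta(-n))$. Multiplication by any nonzero polynomial of degree $n$ injects $H^0(\theta(-n))$ into $H^0(\theta)$, so the hypothesis $h^0(\theta) = 0$ forces $h^0(\theta(-n)) = 0$ for every $n \geq 0$. Hence $h^i(\theta(n)) = 0$ for $i=0,1$ and $n \leq 0$, while $h^0(\theta(1)) = d$ and $h^1(\theta(1)) = 0$. Consequently the graded module $\bigoplus_n H^0(\theta(n))$ has no degree-zero generators and exactly $d$ degree-one generators; Castelnuovo--Mumford regularity on $\mathbb{P}^2$ then produces a minimal resolution
\[
0 \longrightarrow \mathcal{O}_{\mathbb{P}^2}(-2)^{d} \xrightarrow{\ M\ } \mathcal{O}_{\mathbb{P}^2}(-1)^d \longrightarrow \theta \longrightarrow 0,
\]
where the shape of the left-hand term is forced by a rank and Chern-class count in the sequence.

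Next I would apply $\mathcal{H}om_{\mathcal{O}_{\mathbb{P}^2}}(-,\omega_{\mathbb{P}^2})$ to this resolution. Since $\theta$ is supported on $C$, $\mathcal{H}om(\theta,\omega_{\mathbb{P}^2}) = 0$, and Grothendieck duality gives
\[
\mathcal{E}xt^1_{\mathbb{P}^2}(\theta,\omega_{\mathbb{P}^2}) \cong \mathcal{H}om_C(\theta,\omega_C) \cong \theta^{-1}\otimes\omega_C \cong \theta,
\]
the last isomorphism being the theta-characteristic condition. The dualized resolution therefore reads
\[
0 \longrightarrow \mathcal{O}_{\mathbb{P}^2}(-2)^{d} \xrightarrow{\ M^{T}\ } \mathcal{O}_{\mathbb{P}^2}(-1)^d \longrightarrow \theta \longrightarrow 0,
\]
and uniqueness of the minimal free resolution produces invertible constant matrices $A,B \in \mathrm{GL}_d(\mathbb{C})$ with $M^T = A M B$.

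The main obstacle, on which I expect to spend most of the effort, is to promote this equivalence $M\sim M^T$ into a genuine symmetry. Iterating the duality and using $\mathrm{End}(\theta) \cong H^0(C,\mathcal{O}_C) \cong \mathbb{C}$ shows that the self-dual pairing on $\theta$ is unique up to a scalar and satisfies an involutivity relation; this forces $A$ to be related to $B^{T}$ up to a sign, and the sign is controlled by the parity of $h^0(\theta)$. Since $\theta$ is even (indeed $h^0(\theta) = 0$), the symmetric sign occurs, and a change of basis absorbs $A,B$ into $M$ so that $M$ becomes literally symmetric. The equality $\det M = F$ up to a nonzero scalar is then automatic, because $\det M$ is a degree-$d$ form vanishing on the support $C$ of the cokernel. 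For the converse, starting from a symmetric linear $M$ with $\det M = F$, the cokernel is a rank-one torsion-free, hence invertible, sheaf on the smooth curve $C$; reversing the duality argument using the symmetry of $M$ produces an isomorphism $\theta^{\otimes 2} \cong \omega_C$, and minimality of the resolution forces $h^0(\theta) = 0$.
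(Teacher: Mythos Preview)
The paper does not supply its own proof of this statement: it is quoted as \cite[Proposition~4.2]{Beauville00} and used as input, so there is no in-paper argument to compare against. Your outline is essentially Beauville's original proof, and the overall strategy---pin down the shape of the resolution from the vanishing $h^0(\theta)=0$ and $1$-regularity, then dualize via $\mathcal{E}xt^1_{\mathbb{P}^2}(\theta,\omega_{\mathbb{P}^2})\cong\theta$ and symmetrize---is correct.

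One step should be tightened. You say the sign in the relation between $A$ and $B^{T}$ is ``controlled by the parity of $h^0(\theta)$'', but that is not the mechanism at work. Iterating $M^{T}=AMB$ and using $\mathrm{End}(\theta)\cong\mathbb{C}$ yields $B^{T}A=\lambda I$ with $\lambda^{2}=1$, and then $MB$ is symmetric or skew-symmetric according to the sign of $\lambda$. This sign is determined by whether the canonical isomorphism $\theta\to\mathcal{H}om_C(\theta,\omega_C)$ is symmetric or alternating as a pairing $\theta\otimes\theta\to\omega_C$. For a line bundle with $\theta^{\otimes 2}\cong\omega_C$ that pairing is simply multiplication, hence symmetric, so $\lambda=+1$ independently of the parity of $h^0(\theta)$; the hypothesis $h^0(\theta)=0$ is used only to force the numerical shape of the resolution, not to select the sign. (The alternating case in Beauville's paper arises from higher-rank bundles in the Pfaffian setting, not from odd line-bundle theta characteristics.) Likewise, in your converse direction, $h^0(\theta)=0$ follows directly from the long exact sequence of the resolution, since $H^0(\mathcal{O}_{\mathbb{P}^2}(-1))=H^1(\mathcal{O}_{\mathbb{P}^2}(-2))=0$; minimality plays no role there.
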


    Since the notion of theta characteristic is extended to stable curves, giving the compactification of $\mathcal{S}_g$, it is natural to ask whether the Dixon's lemma can also be extended to plane stable curves. A positive answer comes from \cite{Barth77}, \cite{Beauville77} and \cite{Dolgachev12}. Furthermore, the next proposition follows from the cited works (especially from \cite[Proposition 6.23]{Beauville77} and \cite[Theorem 4.1.4]{Dolgachev12}).

    \begin{prop}\label{dixonlemma}
        Let $C$ be a stable plane curve of degree $d$ and genus $g$ defined by one homogeneous equation $F(x_0,x_1,x_2)=0$ and let $\theta$ be an even theta characteristic on $C$. Then, up to projective transformations, there exists a unique symmetric matrix $M_{\theta}\in\mathbf{M}_{d\times d}(\mathbb{C}[x_0,x_1,x_2])$ associated to the pair $[(C,\theta)]\in\overline{\mathcal{S}}_g$, whose entries are linear forms in $x_0,x_1,x_2$ and such that $\mathrm{det}M_{\theta}=F$.
    \end{prop}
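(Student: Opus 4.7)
The strategy is to adapt Beauville's argument for Theorem \ref{minimalresolution} to a quasistable representative of the spin curve class, combining it with the Hilbert-Burch resolution machinery for rank-one torsion-free sheaves on $\mathbb{P}^2$, as carried out in the cited works.

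First, I would fix a quasistable model $X$ of the stable spin curve $[(C,\theta)]$ together with its stabilization morphism $\pi:X\to C$, and compose with the inclusion $i:C\hookrightarrow\mathbb{P}^2$ to form the coherent sheaf $\mathcal{F}:=i_*\pi_*\theta$ on $\mathbb{P}^2$. The two conditions in Definition \ref{stablespincurve}, namely $\theta_{|E}\cong\mathcal{O}_E(1)$ on every exceptional component $E$ and the generic nonvanishing of $\beta$ on every nonexceptional component, together imply that $\pi_*\theta$ is a rank-one torsion-free $\mathcal{O}_C$-module with the same Hilbert polynomial as a theta characteristic on a smooth plane curve of degree $d$. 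Evenness of $\theta$, understood as in Theorem \ref{minimalresolution} via $h^0(\theta)=0$, then yields the vanishings $H^0(\mathbb{P}^2,\mathcal{F}(-1))=H^1(\mathbb{P}^2,\mathcal{F})=0$, and the Hilbert-Burch theorem produces a minimal graded free resolution
\[
0\longrightarrow\mathcal{O}_{\mathbb{P}^2}(-2)^d\overset{M_\theta}{\longrightarrow}\mathcal{O}_{\mathbb{P}^2}(-1)^d\longrightarrow\mathcal{F}\longrightarrow 0.
\]

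To impose symmetry on $M_\theta$, I would dualize this resolution via $\mathcal{H}om(-,\mathcal{O}_{\mathbb{P}^2}(-3))$. The spin homomorphism $\beta$ induces a Grothendieck duality isomorphism $\mathcal{E}xt^1_{\mathbb{P}^2}(\mathcal{F},\mathcal{O}_{\mathbb{P}^2}(-3))\cong\mathcal{F}$, so the dualized complex is a second minimal resolution of $\mathcal{F}$ whose boundary map is $M_\theta^t$. Uniqueness of minimal free resolutions up to $GL_d$-conjugation produces $P\in GL_d$ with $P^tM_\theta P=M_\theta^t$, and a standard change-of-basis argument then allows one to replace $M_\theta$ by a genuinely symmetric representative. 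Since $\det M_\theta$ is a degree-$d$ form vanishing on the support $C$ of $\mathcal{F}$, a scalar normalization gives $\det M_\theta=F$. Finally, two symmetric matrices inducing the same pair $(C,\theta)$ must be related by $M\mapsto Q^tMQ$ for some $Q\in GL_d$, which is precisely the action of a projective transformation on the ambient $\mathbb{P}^{d-1}$; this settles uniqueness up to projective equivalence.

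The main obstacle is controlling the pushforward at the boundary of $\overline{\mathcal{S}}_g$: one must verify that $\pi_*\theta$ has the correct local structure at the nodes of $C$ arising from contracted exceptional components, so that both the Hilbert-Burch resolution and its self-duality remain available uniformly across $\overline{\mathcal{S}}_g$. This is exactly the content of the extensions of Dixon's lemma to the stable case treated in \cite[Proposition 6.23]{Beauville77} and \cite[Theorem 4.1.4]{Dolgachev12}, from which the proposition then follows.
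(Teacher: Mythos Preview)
Your sketch is sound and aligns with the paper's treatment: the paper does not supply an independent proof of this proposition but states that it follows from \cite{Barth77}, \cite[Proposition 6.23]{Beauville77}, and \cite[Theorem 4.1.4]{Dolgachev12}, exactly the sources you invoke for the boundary behavior. You have gone further than the paper by outlining the Hilbert--Burch/self-duality mechanism behind those references; one small caution is that the linear shape of the resolution requires $h^0(\theta)=0$ rather than merely $h^0(\theta)\equiv 0\pmod 2$, so the statement is implicitly restricted to non-vanishing even theta characteristics, as your parenthetical reading of Theorem~\ref{minimalresolution} correctly anticipates.
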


    Such a matrix $M_{\theta}$ represents a net of quadrics in $\mathbb{P}^{d-1}$. Hence, there is a commutative diagram
    \[\begin{tikzcd}
        \mathcal{N}_{d-1}\arrow[dd, "\nu"']\arrow[rr, "\psi"] & &\overline{\mathcal{P}}_d\\
        & & &\psi=\chi_+\circ\nu\\
        \overline{\mathcal{PS}}_d^+\arrow[uurr, "\chi_+"'] & &
    \end{tikzcd}\]
    where $\mathcal{N}_{d-1}:=Gr(3,H^0(\mathcal{O}_{\mathbb{P}^{d-1}}(2)))/\!\!/PGL(d)$ is the moduli space of nets of quadrics of $\mathbb{P}^{d-1}$. The map $\nu$, which associates to a net its discriminant curve with the corresponding even theta characteristic, is surjective and finite of degree 1, by the previous proposition. Then, the map $\psi$ is surjective and finite of degree $2^{g-1}(2^g+1)$, where $g=\frac{(d-1)(d-2)}{2}$.

\section{Construction of the Severi divisor}

\subsection{The polar linear system of a cubic fourfold}

    For the purpose of this work, we fix coordinates $(x_0:\ldots:x_5)$ on $\mathbb{P}^5$, that is, a basis of the 6-dimensional vector space $V_6:=H^0(\mathcal{O}_{\mathbb{P}^5}(1))$. Let $X\subset\mathbb{P}^5$ be a cubic fourfold, defined by a nonzero homogeneous cubic polynomial $F\in\mathrm{Sym}^3V_6$. Moreover, $X\in\mathbb{P}^{55}:=|\mathcal{O}_{\mathbb{P}^{5}}(3)|$, the linear system of all the cubic fourfolds of $\mathbb{P}^5$. Consider the vector space of polar forms of $X$,
    \[
        J_X:=\left\{\sum_{i=0}^{5} p_i\frac{\partial F}{\partial x_i} \ | \ p=(p_0,\dots,p_{5})\in\mathbb{C}^{6}\right\},
    \]
    whose dimension is at most 6. In the sequel we always assume that $\mathrm{dim}J_X=6$, which is for instance the case of any smooth $X$. The associated \textit{polar linear system} is \mbox{$|J_X|:=\mathbb{P}(J_X)\cong\mathbb{P}^{5}$}. Each of its elements is defined by an equation
    \[
        p_0\frac{\partial F}{\partial x_0}+\ldots+p_5\frac{\partial F}{\partial x_5}=0,
    \]
    where $p=[p_0:\ldots:p_5]$ is a point moving in $\mathbb{P}^5=|J_X|$. Such an equation defines a quadric hypersurface in $\mathbb{P}^5$, which is a \textit{polar quadric} of $X$.

\subsection{Cubic fourfolds with the same polar net}

    Consider the nets of polar quadrics of a cubic fourfold $X$ satisfying $\mathrm{dim}J_X=6$: they are the projectivized of all the possible 3-dimensional vector subspaces of $J_X$. For any $X$ we consider the 9-dimensional Grassmannian $Gr(3,J_X)\cong Gr(3,6)$. An element $W\in Gr(3,J_X)$ is a vector subspace of $J_X$, generated by three independent quadratic forms of $J_X$, with parameters $\lambda,\mu,\nu$. We denote by $N:=\mathbb{P}(W)\cong\mathbb{P}^2$ the associated net of quadrics, with $N\subset|J_X|\subset|\mathcal{O}_{\mathbb{P}^{5}}(2)|$. A general point of $N$ parametrizes one polar quadric $Q_N$ whose coefficient matrix $M(Q_N)$ is symmetric of dimension $6\times 6$ with entries that are linear forms in the variables $(\lambda,\mu,\nu)$. The degenerate quadrics of the net are parametrized by the plane curve $\Delta_N=\{\det M(Q_N)=0\}$ of degree 6 contained in \mbox{$N\cong\mathbb{P}^2_{[\lambda:\mu:\nu]}$}. In particular, if $\Delta_5\subset|\mathcal{O}_{\mathbb{P}^5}(2)|$ is the discriminant hypersurface parametrizing the singular quadrics, i.e. the ones with rank $\leq 5$, then $\Delta_N=N\cap\Delta_5$.

    \begin{prop}\label{smoothsextic}
        Let $X$ be a general smooth cubic fourfold. A general net $N$ of polar quadrics of $X$ has a smooth discriminant plane sextic curve $\Delta_N$.
    \end{prop}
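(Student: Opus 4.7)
The plan is to reinterpret the discriminant sextic as a plane section of the Hessian hypersurface of $F$ in $\mathbb{P}^5$, and then combine a codimension estimate on its singular locus with a Bertini argument. First I identify $\mathbb{P}^5$ with $|J_X|$ via the polar map $p\mapsto P_p$, which is an isomorphism when $\dim J_X=6$. Since the coefficient matrix of the quadric $P_p$ equals the Hessian matrix $H_F(p)$, the divisor $|J_X|\cap\Delta_5$ pulls back to the Hessian sextic
\[
    \mathcal{H}_F:=\{p\in\mathbb{P}^5 : \det H_F(p)=0\},
\]
and a net $N\subset|J_X|$ corresponds to a $2$-plane $\Pi\subset\mathbb{P}^5$ with $\Delta_N=\Pi\cap\mathcal{H}_F$.

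Next I would bound the singular locus of $\mathcal{H}_F$. The singular locus of the discriminant hypersurface $\Delta_5\subset|\mathcal{O}_{\mathbb{P}^5}(2)|$ is the locus of quadrics of rank at most $4$, which has codimension $\binom{3}{2}=3$. Pulling back, $\mathrm{Sing}(\mathcal{H}_F)$ is contained in the rank stratum
\[
    R_F:=\bigl\{p\in\mathbb{P}^5 : \mathrm{rk}\,H_F(p)\leq 4\bigr\}.
\]
To show $\dim R_F\leq 2$ for a generic $F$, I would look at the incidence variety $I:=\{(F,p)\in\mathbb{P}^{55}\times\mathbb{P}^5:\mathrm{rk}\,H_F(p)\leq 4\}$ and project it onto $\mathbb{P}^5$. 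For a fixed $p$ the linear evaluation $F\mapsto H_F(p)$ into symmetric $6\times 6$ matrices is surjective, as one checks explicitly at $p=[1:0:\ldots:0]$ by varying the coefficients of the monomials $x_0\cdot x_ix_j$ in $F$. Hence each fiber of $I\to\mathbb{P}^5$ has codimension $3$ in $\mathbb{P}^{55}$, giving $\dim I=57$, so that for a general cubic $F$ the stratum $R_F\subset\mathbb{P}^5$ has dimension at most $2$.

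To conclude, a general $2$-plane $\Pi\subset\mathbb{P}^5$ avoids any subvariety of dimension at most $2$ by the expected intersection count $2+2-5=-1$, so $\Delta_N$ is contained in the smooth locus $\mathcal{H}_F\setminus R_F$. Applying Bertini iteratively to the hyperplane sections cutting out $\Pi$ then ensures that $\mathcal{H}_F\cap\Pi=\Delta_N$ is smooth for generic $\Pi$, yielding the desired smooth plane sextic.

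The main technical obstacle is the codimension estimate on $R_F$: one must verify that for a generic cubic the Hessian behaves like a generic symmetric matrix of linear forms, in the sense that its corank-$\geq 2$ stratum has the expected codimension $3$ in $\mathbb{P}^5$. Once this transversality input is secured, the rest reduces to a dimension count and a standard Bertini argument.
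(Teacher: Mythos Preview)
Your argument is correct and follows the same overall strategy as the paper: identify $\Delta_N$ as a plane section of a sextic hypersurface in $\mathbb{P}^5$, bound its singular locus by the corank-$\geq 2$ stratum, show this stratum has dimension at most $2$, and conclude by Bertini. The paper phrases this inside $|\mathcal{O}_{\mathbb{P}^5}(2)|\cong\mathbb{P}^{20}$, writing $\Delta_5^X=\Delta_5\cap|J_X|$ and $\mathrm{Sing}(\Delta_5^X)=\Delta_4\cap|J_X|$, whereas you pull everything back to $\mathbb{P}^5$ via the Hessian identification $M(P_p)=H_F(p)$; these are two descriptions of the same object.

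The one substantive difference is that the paper simply asserts that for general $X$ the polar system $|J_X|$ is a ``general'' $5$-plane in $\mathbb{P}^{20}$, and hence meets the codimension-$3$ locus $\Delta_4$ in the expected dimension~$2$. Taken literally this is imprecise, since the map $X\mapsto|J_X|$ cannot dominate $Gr(6,21)$ for dimension reasons; what is really needed is exactly the transversality statement you isolate. Your incidence-variety computation (surjectivity of $F\mapsto H_F(p)$ onto symmetric matrices for each fixed $p$, hence $\dim I=57$, hence $\dim R_F\leq 2$ for generic $F$) supplies a clean proof of this point. So your route is the same in spirit but more careful at the one place where the paper waves its hands.
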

    \begin{proof}
        Let us consider the linear system $|\mathcal{O}_{\mathbb{P}^5}(2)|\cong\mathbb{P}^{20}$. It is known that $\mathrm{Sing}(\Delta_5)=\Delta_4$ is the locus parametrizing the quadrics with rank $\leq 4$ and it has dimension 17. Let $X$ be a general smooth cubic fourfold: its polar linear system $|J_X|\cong\mathbb{P}^5$ is thus a general 5-dimensional linear subspace of $\mathbb{P}^{20}$ and the discriminant hypersurface in $|J_X|$ is the intersection $\Delta_5\cap|J_X|=:\Delta_5^X$. The singular locus of $\Delta_5^X$ is given by $\Delta_4\cap|J_X|=:\Delta_4^X$ and has dimension 2. By a dimension count, a general net $N\subset|J_X|$ does not intersect the singular locus $\Delta_4^X$: indeed, $\Delta_4^X$ has codimension 3 in $|J_X|$ while $N$ has dimension 2. Then, $N$ just meets, by Bertini's theorem, smoothly and transversally the smooth locus of $\Delta_5^X$ in the sextic curve $\Delta_N$.
    \end{proof}

    \begin{notation}
        From now on we fix our notation as follows:
        \begin{itemize} 
            \item  $\mathring{\mathbb P}^{55}:=\{X\in\mathbb P^{55} \ | \ \dim J_X=6\}$. This set is open, since its complement is the projectivized dependency locus, in the space $H^0(\mathcal O_{\mathbb P^5}(3))$, of the differential operators $\frac{\partial}{\partial x_0},\dots,\frac{\partial}{\partial x_5}$. Also, it contains the open set of smooth cubic fourfolds.
            \item  $\mathbb G$ for the Grassmannian $Gr(3, H^0(\mathcal O_{\mathbb P^5}(2)))$ of all the nets of quadrics of $\mathbb P^5$.
            \item  $\mathbb G_X$ for the Grassmannian $Gr(3, J_X)$, when $X \in \mathring {\mathbb P}^{55}$. 
        \end{itemize}
    \end{notation}
    
    We fix the same notation for their Pl\"ucker embeddings. We have a natural commutative diagram
    \[\begin{tikzcd}
        \mathbb P^{19}\arrow[hookrightarrow,r] & \mathbb P^{\binom{21}{3} -1}\\
        \mathbb{G}_X\arrow[hookrightarrow,u]\arrow[hookrightarrow,r] & \mathbb{G}\arrow[hookrightarrow,u]
    \end{tikzcd}\]
    where the vertical arrows denote the respective Pl\"ucker embeddings and the horizontal arrows are the induced natural inclusions. The diagram follows from the inclusion of $J_X$ in $H^0(\mathcal O_{\mathbb P^5}(2))$ as a vector subspace. A first simple question is whether every $N \in \mathbb G$ is a polar net of some cubic fourfold $X$. Let us define
    \[
        \widetilde {\mathbb G} := \overline { \{ (X, N) \in \mathring {\mathbb P^{55}} \times \mathbb G \ \vert \ N \in \mathbb G_X \}},
    \]
    with the closure in the product $\mathbb P^{55} \times \mathbb G$. Then, $\dim \widetilde {\mathbb G} = \dim \mathbb P^{55}+\dim \mathbb G_X = 64$ and $\dim \mathbb G = 54$. Moreover, let us consider the two projections
    \[\begin{tikzcd}
        & \widetilde {\mathbb G} \arrow[dl,"\widetilde{\gamma}"']\arrow[dr, "\widetilde{\tau}"] &\\
        \mathbb{P}^{55} & & \mathbb{G}.
    \end{tikzcd}\]

    \begin{thm}\label{10dimensionalfiber}
        The morphism $\widetilde{\tau}: \widetilde {\mathbb G} \longrightarrow \mathbb G$ is surjective.
    \end{thm}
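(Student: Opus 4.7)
The plan is to combine properness with a fiber-dimension count. By construction $\widetilde{\mathbb{G}}$ is closed in $\mathbb{P}^{55}\times\mathbb{G}$, so $\widetilde{\tau}$ is proper and $\widetilde{\tau}(\widetilde{\mathbb{G}})$ is closed in $\mathbb{G}$; moreover, over the open set $\mathring{\mathbb{P}}^{55}$ the other projection $\widetilde{\gamma}$ is a $Gr(3,6)$-bundle, so $\widetilde{\mathbb{G}}$ is irreducible of dimension $55+9=64$ and the image is irreducible as well. To conclude $\widetilde{\tau}(\widetilde{\mathbb{G}})=\mathbb{G}$ it then suffices to show the image has dimension $54=\dim \mathbb G$, which by the fiber-dimension theorem and upper semicontinuity reduces to finding one single net $N_0\in\widetilde{\tau}(\widetilde{\mathbb{G}})$ with $\dim\widetilde{\tau}^{-1}(N_0)\leq 10$.

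The natural candidate is
\[
    N_0 = \bigl\langle x_0x_1,\, x_0x_2,\, x_1x_2 \bigr\rangle,
\]
a polar net of $F_0=x_0x_1x_3+x_0x_2x_4+x_1x_2x_5$ because its three generators are exactly $\partial F_0/\partial x_3,\partial F_0/\partial x_4,\partial F_0/\partial x_5$, so $(F_0,N_0)\in\widetilde{\mathbb{G}}$. The key step is then to compute the Zariski tangent space of the fiber at $[F_0]$. For a cubic $F$ near $F_0$ the condition $N_0\subset J_F$ amounts to requiring that the linear map $v\mapsto v\cdot\nabla F\bmod N_0$ from $V_6$ to $\mathrm{Sym}^2V_6^*/N_0$ have rank at most $3$. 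A first-order analysis, using that the vectors realizing the three generators of $N_0$ as polars of $F_0$ are uniquely $e_3,e_4,e_5$ (because $\nabla F_0\colon V_6\to J_{F_0}$ is an isomorphism), gives
\[
    T_{[F_0]}\widetilde{\tau}^{-1}(N_0) \;=\; \bigl\{\, G\in \mathrm{Sym}^3V_6^* \;\big|\; \partial G/\partial x_i \in J_{F_0} \text{ for } i=3,4,5 \,\bigr\}/\mathbb{C}F_0.
\]

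Since $J_{F_0}$ has codimension $15$ in $\mathrm{Sym}^2V_6^*$, each of the three membership conditions is a system of $15$ linear equations on the coefficients of $G$, for a total of $45$ equations on the $56$-dimensional space $\mathrm{Sym}^3V_6^*$. The expected affine dimension of the solution space is $11$, giving projective tangent dimension $10$. The main obstacle is verifying that these $45$ equations are linearly independent: the monomials of $G$ involved in the three conditions overlap (those with at least one index in $\{3,4,5\}$, totaling $46$ monomials), and one must rule out hidden dependencies. The choice of $F_0$ is designed for this: the six explicit generators of $J_{F_0}$, namely $x_0x_1,\, x_0x_2,\, x_1x_2,\, x_1x_3+x_2x_4,\, x_0x_3+x_2x_5,\, x_0x_4+x_1x_5$, make the coefficient conditions transparent, and the rank of the resulting $45\times 56$ matrix can be checked by inspection of its block structure. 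Once $\dim\widetilde{\tau}^{-1}(N_0)\leq 10$ is established, semicontinuity yields $\dim\widetilde{\tau}(\widetilde{\mathbb{G}})\geq 54$, which together with irreducibility and closedness forces $\widetilde{\tau}(\widetilde{\mathbb{G}})=\mathbb{G}$.
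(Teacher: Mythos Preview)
Your overall strategy (properness, irreducibility, plus a single fiber of dimension $\leq 10$) is sound and is exactly the paper's approach. The problem is your choice of net: $N_0=\langle x_0x_1,\,x_0x_2,\,x_1x_2\rangle$ is far too degenerate, and the fiber over it is \emph{not} $10$-dimensional. Concretely, every quadric in $N_0$ has rank $\leq 3$ with common vertex $\{x_0=x_1=x_2=0\}$, so the discriminant ``curve'' of $N_0$ is the entire plane. This degeneracy shows up in your tangent-space count: the $45$ linear conditions you wrote down have rank only $36$, not $45$. Indeed, the subspace
\[
K=\bigl\{H\in\mathrm{Sym}^3V_6^*:\ \partial H/\partial x_i\in J_{F_0}\ \text{for }i=3,4,5\bigr\}
\]
has dimension $20$, not $11$: it contains all of $\mathbb{C}[x_0,x_1,x_2]_3$ (dimension $10$), the nine monomials $x_ix_jx_k$ with $i<j\in\{0,1,2\}$ and $k\in\{3,4,5\}$, and the element $x_0x_3x_4+x_1x_3x_5+x_2x_4x_5$. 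Worse, the fiber itself is large: for any cubic $F=\sum_{k=3}^5 x_k\cdot\ell_k(x_0x_1,x_0x_2,x_1x_2)+G(x_0,x_1,x_2)$ with the $\ell_k$ generic linear combinations, one has $\langle\partial_3F,\partial_4F,\partial_5F\rangle=W_0$, so $W_0\subset J_F$. This already gives an $18$-dimensional projective family inside $\widetilde{\tau}^{-1}(N_0)$.

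The fix is to choose a genuinely generic net, which is what the paper does in its Example~3.3: there the three quadrics are written down with enough cross-terms that the integrability constraints (matching mixed partials) leave exactly a $\mathbb{P}^{10}$ of cubics. Your tangent-space method would work with such a net, but not with the rank-$\leq 3$ net you picked.
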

    \begin{proof}
        Since $\dim \widetilde{\mathbb G} - \dim \mathbb G = 10$, it suffices to produce one cubic fourfold $X$ and one polar net $N \in \vert J_X \vert$ such that the fiber $\widetilde{\tau}^{-1}(N)$ is $10$-dimensional. This is made precise in the forthcoming example.
    \end{proof}
    \begin{ex}\label{example}
        Consider three quadrics in $\mathbb{P}^5$, such as the following:
        \begin{itemize}
            \item $Q_0=\{3x_0^2+2x_0x_5+3x_1^2+4x_1x_4+16x_2x_3+4x_2x_5+x_3x_5+6x_1x_5=0\}$
            \item $Q_1=\{2x_0x_1+\frac{4}{3}x_0x_4+2x_0x_5+2x_1x_5+3x_2^2+3x_3^2+2x_3x_4+6x_4x_5+3x_2x_3=0\}$
            \item $Q_2=\{\frac{16}{9}x_0x_3+\frac{4}{9}x_0x_5+2x_1x_2+x_1x_3-2x_2x_5+3x_4^2+3x_5^2=0\}$
        \end{itemize}
        They generate the vector space $W=\{\lambda Q_0+\mu Q_1+\nu Q_2:\lambda,\mu,\nu\in\mathbb{C}\}$ and the associated net of quadrics $N=\mathbb{P}(W)\cong\mathbb{P}^2_{[\lambda:\mu:\nu]}$. Up to a change of coordinates, we want to find all the possible cubic fourfolds $X=V(F)$ such that $W\in\mathbb{G}_X$. We proceed as follows: to find the equation $F$ we impose, without loss of generality, that
        \[\begin{cases}
            \frac{\partial F}{\partial x_0}=3x_0^2+2x_0x_5+3x_1^2+4x_1x_4+16x_2x_3+4x_2x_5+x_3x_5+6x_1x_5 \\
            \frac{\partial F}{\partial x_1}=2x_0x_1+\frac{4}{3}x_0x_4+2x_0x_5+2x_1x_5+3x_2^2+3x_3^2+2x_3x_4+6x_4x_5+3x_2x_3 \\
            \frac{\partial F}{\partial x_2}=\frac{16}{9}x_0x_3+\frac{4}{9}x_0x_5+2x_1x_2+x_1x_3-2x_2x_5+3x_4^2+3x_5^2 
        \end{cases}\]
        from which we deduce that $F=F(x_0,\dots,x_5)$ must assume all the following forms:
        \[\begin{cases}
            \begin{aligned}
                F=&a\left(x_0^3+x_0^2x_5+3x_0x_1^2+4x_0x_1x_4+16x_0x_2x_3+4x_0x_2x_5+x_0x_3x_5+6x_0x_1x_5\right)+\\
                &G_0(x_1,x_2,x_3,x_4,x_5)
            \end{aligned}\\
            \begin{aligned}
                F=&b\left(x_0x_1^2+\frac{4}{3}x_0x_1x_4+2x_0x_1x_5+x_1^2x_5+3x_1x_2^2+3x_1x_3^2+2x_1x_3x_4+6x_1x_4x_5+3x_1x_2x_3\right)+\\
                &G_1(x_0,x_2,x_3,x_4,x_5)
            \end{aligned}\\
            \begin{aligned}
                F=&c\left(\frac{16}{9}x_0x_2x_3+\frac{4}{9}x_0x_2x_5+x_1x_2^2+x_1x_2x_3-x_2^2x_5+3x_2x_4^2+3x_2x_5^2\right)+\\
                &G_2(x_0,x_1,x_3,x_4,x_5)
            \end{aligned}
        \end{cases}\]
        where $a,b,c\in\mathbb{C}-\{0\}$ and the $G_i$'s are homogeneous cubic forms. Then we see that, necessarily, $3a=b$ and $3b=c$. It follows that the equation of the fourfolds we are looking for has the following form
        \[\begin{aligned}
            F(x_0,\dots,x_5)=&ax_0^3+ax_0^2x_5+3ax_0x_1^2+4ax_0x_1x_4+16ax_0x_2x_3+4ax_0x_2x_5+ax_0x_3x_5+ \\
            &6ax_0x_1x_5+3ax_1^2x_5+9ax_1x_2^2+9ax_1x_3^2+6ax_1x_3x_4+18ax_1x_4x_5+ \\
            &9ax_1x_2x_3-9ax_2^2x_5+27ax_2x_4^2+27ax_2x_5^2+C(x_3,x_4,x_5)=0,
        \end{aligned}\]
         where $C(x_3,x_4,x_5)$ is a homogeneous polynomial of degree 3 in the variables $x_3,x_4,x_5$ only. Such a writing represents a family of cubic forms in the variables $x_0,\dots,x_5$ depending on $a\in\mathbb{C}-\{0\}$ and on the 10 parameters of $C(x_3,x_4,x_5)$. Hence, it is a family of cubic fourfolds of projective dimension 10. \qed
    \end{ex}

    The previous arguments prove the next theorem.

    \begin{thm}
        A general $N\in\mathbb{G}$ is a polar net of a general smooth cubic fourfold $X$.
    \end{thm}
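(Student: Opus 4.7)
The plan is to combine Theorem \ref{10dimensionalfiber} (surjectivity of $\widetilde{\tau}$) with the irreducibility of $\widetilde{\mathbb{G}}$ and a short density argument, so that the word ``general'' can be read off cleanly on both sides.

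First I would establish that $\widetilde{\mathbb{G}}$ is irreducible of dimension $64$. Over the open subset $\mathring{\mathbb{P}}^{55}$ the first projection $\widetilde{\gamma}$ realises a Grassmannian bundle with irreducible fibre $\mathrm{Gr}(3,6)$, so $\widetilde{\gamma}^{-1}(\mathring{\mathbb{P}}^{55})$ is irreducible of dimension $55+9=64$, and $\widetilde{\mathbb{G}}$ is its closure. Let $V\subset\mathbb{P}^{55}$ be the open, non-empty locus of smooth cubic fourfolds (which is contained in $\mathring{\mathbb{P}}^{55}$), and set $U:=\widetilde{\gamma}^{-1}(V)$. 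Then $U$ is open and non-empty in $\widetilde{\mathbb{G}}$, hence dense by irreducibility.

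Next I would show that $\widetilde{\tau}(U)$ contains a non-empty open subset of $\mathbb{G}$. By Chevalley's theorem $\widetilde{\tau}(U)$ is constructible. If its closure were a proper subset of $\mathbb{G}$, then $\widetilde{\tau}^{-1}(\overline{\widetilde{\tau}(U)})$ would be a closed subset of $\widetilde{\mathbb{G}}$ containing the dense $U$, hence all of $\widetilde{\mathbb{G}}$; but then $\widetilde{\tau}(\widetilde{\mathbb{G}})\subseteq\overline{\widetilde{\tau}(U)}\subsetneq\mathbb{G}$, contradicting the surjectivity of $\widetilde{\tau}$ supplied by Theorem \ref{10dimensionalfiber}. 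Thus $\widetilde{\tau}(U)$ is a constructible dense subset of the irreducible variety $\mathbb{G}$, and therefore contains a non-empty open subset $\Omega\subset\mathbb{G}$. For every $N\in\Omega$ the fibre $\widetilde{\tau}^{-1}(N)$ meets $U$, i.e.\ $N$ is a polar net of some smooth cubic fourfold $X\in V$; since $V$ is open dense, $X$ may indeed be taken to be a general smooth cubic fourfold.

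The main input is Theorem \ref{10dimensionalfiber}, itself underwritten by the explicit $10$-dimensional fibre produced in Example \ref{example}; after that the only delicate step is the density of $\widetilde{\tau}(U)$, which reduces cleanly to irreducibility of $\widetilde{\mathbb{G}}$ combined with surjectivity of $\widetilde{\tau}$. I expect this last reduction to be the only place where one must be careful, since one must remember to pass from ``constructible dense'' to ``contains an open dense'', which is where irreducibility of $\mathbb{G}$ enters.
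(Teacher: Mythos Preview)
Your argument is correct and follows the same route as the paper, which simply asserts that ``the previous arguments prove the next theorem'': both rely on the surjectivity of $\widetilde{\tau}$ from Theorem~\ref{10dimensionalfiber} (underwritten by Example~\ref{example}) together with the irreducibility of $\widetilde{\mathbb{G}}$ as a Grassmann bundle over $\mathring{\mathbb{P}}^{55}$. You have simply made explicit the density/constructibility reasoning that the paper leaves to the reader; the only phrase to tighten is the final clause about $X$ being ``general'', which here just means that $X$ lies in the open locus $V$ of smooth cubics.
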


    \begin{rmk}\label{importantrmk}
        Over an open neighborhood of the net $N$ of the example \ref{example}, the morphism $\widetilde{\tau}$ is proper and smooth, in particular flat, with fiber $\mathbb P^{10}$. The description of $\widetilde{\tau}$ can be even more accurate, as in the next theorem.
    \end{rmk}
    
    \begin{thm}\label{P10bundle}
        $\widetilde{\tau}$ is a locally trivial $\mathbb P^{10}$-bundle over an open dense set of $\mathbb G$.
    \end{thm}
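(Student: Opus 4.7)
The plan is to produce a relative hyperplane class for $\widetilde\tau$ and invoke the standard criterion that a proper smooth morphism with $\mathbb P^{n}$-fibers which admits a line bundle of fiberwise degree $1$ is a Zariski-locally trivial $\mathbb P^{n}$-bundle, namely the projectivization of its pushforward. I would take $V\subset \mathbb G$ to be an open neighborhood of the net $N$ of Example \ref{example} on which $\widetilde\tau$ is proper and smooth with $\mathbb P^{10}$-fibers, as granted by Remark \ref{importantrmk}; since $\mathbb G$ is irreducible, $V$ is automatically open dense and connected. The inclusion $\widetilde{\mathbb G}\hookrightarrow \mathbb P^{55}\times \mathbb G$ equips $\widetilde{\mathbb G}$ with the natural line bundle $\mathcal L:=\mathcal O_{\mathbb P^{55}}(1)|_{\widetilde{\mathbb G}}$.

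The key step is to verify that $\mathcal L$ restricts to $\mathcal O_{\mathbb P^{10}}(1)$ on every fiber of $\widetilde\tau$ over $V$. For the specific net $N$ of Example \ref{example}, the explicit parameterization $(a,C)\mapsto aF_{1}+C(x_{3},x_{4},x_{5})$ carried out there identifies $\widetilde\tau^{-1}(N)$ with the projectivization of an $11$-dimensional linear subspace of $\mathrm{Sym}^{3}V_{6}$. Hence $\widetilde\tau^{-1}(N)\subset \mathbb P^{55}$ is a linearly embedded $\mathbb P^{10}$ and $\mathcal L|_{\widetilde\tau^{-1}(N)}\cong \mathcal O_{\mathbb P^{10}}(1)$. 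Since $\mathrm{Pic}(\mathbb P^{10})\cong \mathbb Z$ is discrete, the degree of $\mathcal L$ on the $\mathbb P^{10}$-fibers of a smooth proper family is locally constant on the base, so by connectedness of $V$ this degree equals $1$ on every fiber over $V$.

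With this in hand I would invoke cohomology and base change: as $H^{i}(\mathbb P^{10},\mathcal O(1))=0$ for $i>0$, the sheaf $\mathcal E:=\widetilde\tau_{\ast}\mathcal L$ is locally free of rank $h^{0}(\mathbb P^{10},\mathcal O(1))=11$ on $V$ and its formation commutes with arbitrary base change. The counit $\widetilde\tau^{\ast}\mathcal E\to \mathcal L$ is surjective, because its restriction to each fiber is the canonical surjection $H^{0}(\mathbb P^{10},\mathcal O(1))\otimes \mathcal O_{\mathbb P^{10}}\twoheadrightarrow \mathcal O(1)$; by the universal property of projective bundles it defines a $V$-morphism $\widetilde{\mathbb G}|_{V}\to \mathbb P(\mathcal E)$ that on each fiber is the tautological isomorphism $\mathbb P^{10}\xrightarrow{\sim}\mathbb P\bigl(H^{0}(\mathbb P^{10},\mathcal O(1))^{\ast}\bigr)$. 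A proper $V$-morphism between smooth $V$-schemes which is a fiberwise isomorphism is itself an isomorphism, so $\widetilde{\mathbb G}|_{V}\cong \mathbb P(\mathcal E)$ is a Zariski-locally trivial $\mathbb P^{10}$-bundle over the open dense set $V\subset \mathbb G$. The principal obstacle is checking the linearity of a single fiber inside $\mathbb P^{55}$, which is precisely the content of the direct computation in Example \ref{example}; the remaining steps are the standard projective-bundle formalism.
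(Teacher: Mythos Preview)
Your proposal is correct and follows essentially the same strategy as the paper: pull back $\mathcal O_{\mathbb P^{55}}(1)$ to $\widetilde{\mathbb G}$, push it forward along $\widetilde\tau$ via Grauert/cohomology-and-base-change to obtain a rank~$11$ bundle, and identify $\widetilde\tau^{-1}(V)$ with its projectivization. The only cosmetic difference is that the paper asserts directly from Example~\ref{example} that all fibers over an open set are linear $\mathbb P^{10}$'s in $\mathbb P^{55}$, whereas you deduce this from linearity of a single fiber together with local constancy of the degree of $\mathcal L$; your version is a bit more careful, but the content is the same.
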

    \begin{proof}
        The example \ref{example} implies that, on an open dense set $A \subset \mathbb G$, the corresponding family of fibers of $\widetilde{\tau}$ is a family of $10$-dimensional subspaces of $\mathbb P^{55}$. For each $N \in A$, we have indeed that $\widetilde{\tau}^{-1}(N) = \widetilde {\mathbb G} \cdot \left(\mathbb P^{55} \times \{ N \}\right)$ is a $10$-dimensional linear system of cubic fourfolds. Then, by Grauert's theorem, $\widetilde{\tau}_*(\widetilde{\gamma}^* \mathcal O_{\mathbb P^{55}}(1))$ is a vector bundle over $A$, via $\widetilde{\tau}$, and its projectivization is $\widetilde{\tau}^{-1}(A)$.
    \end{proof}

\subsection{The Severi varieties of plane sections of $\Delta_5$}\label{severivarietiesofplanesections}

    The purpose of this section is to study the stratification, by the number of their nodes, of the family of nodal plane sections of the discriminant hypersurface $\Delta_5 \subset |\mathcal{O}_{\mathbb{P}^5}(2)|$. We will also focus on the induced stratification of the plane sections of $\Delta_5^X\subset \vert J_X \vert$, where $X$ is a smooth cubic fourfold. Clearly, the strata of such a stratification are $PGL(6)$-invariant and useful to study various loci in $\mathcal C$. Actually, the core of this work is the study of the locus of points $[X]\in\mathcal{C}$ such that the Severi variety of $10$-nodal irreducible plane sections of $\Delta_5^X$ is not empty. This locus of $\mathcal C$ has virtual codimension one: we will show that it is an irreducible divisor and that it is not of Noether-Lefschetz type.
    
    From now on, we will simply denote by $\mathbb{P}^{20}$ the linear system $|\mathcal{O}_{\mathbb{P}^5}(2)|$. To avoid ambiguous notation, we denote by $n$ the point of $\mathbb G$ defined by the net $N$. Then we consider the universal plane $u: \mathbb U \longrightarrow \mathbb G$, whose fiber at $n$ is $\mathbb{U}_n=N\cong\mathbb{P}^2$. Notice that $\mathbb U$ is naturally embedded as $\mathbb U =\{ (n,q) \in \mathbb G \times \mathbb P^{20} \ \vert \ q \in N \}$ and it is endowed with the projection maps
    \[\begin{tikzcd}
        & \mathbb U\arrow[dl,"u"']\arrow[dr, "t"] &\\
        \mathbb{G} & & \mathbb{P}^{20}.
    \end{tikzcd}\]
    The fiber of $t$ at $q\in\mathbb{P}^{20}$ is the family of the nets $N$ containing the point $q$. It is easy to see that this is biregular to $Gr(2,20)$, that is, the Grassmannian of lines of $\mathbb P^{19}$. Now we consider the stratification of $\mathbb G$ according to the nodal plane sections of $\Delta_5$ with $\delta$ nodes. For this purpose let us define
    \[
        \mathring {\mathbb G}^{\delta} := \{ n \in \mathbb G \ \vert \ \text{ $\mathrm{Sing}(\Delta_N)$ consists of $\delta$ ordinary double points} \}.
    \]
    Clearly, $\mathbb G$ is the disjoint union of the above locally closed strata, for $\delta = 0, \dots, 15$. Indeed, $15$ is the maximal number of nodes for a nodal plane sextic, realized by the union of six general lines. The universal plane section over $\mathring{\mathbb G}^{\delta}$, for every $\delta$, is contained in the pull-back $t^*\Delta_5 \subset \mathbb U$.
    \begin{notation}
        Let us denote by $\mathbb G^{\delta}$ the closure of $\mathring {\mathbb G}^{\delta}$ in $\mathbb G$.
    \end{notation}
    Deformation theory for nodal plane curves, (see, for reference, \cite[Section 4.7]{Sernesi06}), together with the proposition \ref{dixonlemma} imply that:
    
    \begin{thm}\label{mainthm2}
        Every irreducible component of $\mathbb G^{\delta}$ has codimension $\delta$ in $\mathbb G$.
    \end{thm}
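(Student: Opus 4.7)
The plan is to transfer the codimension statement from $\mathbb{G}^{\delta}$ to a classical statement about the Severi variety of nodal plane sextics, using Dixon's lemma (Proposition \ref{dixonlemma}) as the bridge. The classical input, from Sernesi (Section 4.7), is the Severi--Harris theorem: the Severi variety $V^{6,\delta}\subset|\mathcal{O}_{\mathbb{P}^2}(6)|\cong\mathbb{P}^{27}$ of plane sextics with exactly $\delta$ ordinary nodes is pure of codimension $\delta$ wherever it is nonempty; equivalently, $\delta$ ordinary nodes always impose $\delta$ independent linear conditions on first-order deformations inside the complete linear system.

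To transfer this into a statement on $\mathbb{G}$, I would fix a general point $n_0$ in a chosen irreducible component of $\mathring{\mathbb{G}}^{\delta}$, with associated net $N_0$ whose discriminant $\Delta_{N_0}$ is a stable plane sextic with exactly $\delta$ nodes. By Proposition \ref{dixonlemma}, the assignment $N\mapsto(\Delta_N,\theta_N)$ identifies regularly, over the locus of stable discriminants, the GIT quotient $\mathcal{N}_5=\mathbb{G}/\!\!/PGL(6)$ with the moduli space $\overline{\mathcal{PS}}_6^+$; composing with the finite stabilization morphism $\chi_+:\overline{\mathcal{PS}}_6^+\to\overline{\mathcal{P}}_6$ and then with the $PGL(3)$-quotient $|\mathcal{O}_{\mathbb{P}^2}(6)|\to\overline{\mathcal{P}}_6$ exhibits $\mathring{\mathbb{G}}^{\delta}$, locally near $n_0$, as a pullback of $V^{6,\delta}$. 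At $n_0$ the map $\mathbb{G}\to\mathcal{N}_5$ is a principal $PGL(6)$-bundle (consistent with the dimension count $54=35+19$), the Dixon identification with $\overline{\mathcal{PS}}_6^+$ is a finite birational morphism, and $|\mathcal{O}_{\mathbb{P}^2}(6)|\to\overline{\mathcal{P}}_6$ is a geometric $PGL(3)$-quotient; all three preserve codimension, so the Severi--Harris codimension-$\delta$ assertion on $|\mathcal{O}_{\mathbb{P}^2}(6)|$ pulls back to codimension $\delta$ for $\mathring{\mathbb{G}}^{\delta}$ at $n_0$. Since $n_0$ is a general point of an arbitrary component, passing to closures yields the statement for every irreducible component of $\mathbb{G}^{\delta}$.

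The main technical point is the careful bookkeeping of the successive quotients and covers, and in particular ensuring that $n_0$ can be chosen so that its $PGL(6)$-stabilizer is trivial and so that $\Delta_{N_0}$ is a smooth point of $V^{6,\delta}$; both conditions are generic and hold by Proposition \ref{dixonlemma} combined with the nonemptiness of $V^{6,\delta}$ for $\delta\leq 15$. Proposition \ref{dixonlemma} is precisely what allows the identification to be extended across the nodal locus: a plane sextic with only ordinary nodes (and no rational component meeting the rest in fewer than three points, a generic condition for $\delta\leq 10$) is stable, so Cornalba's moduli space $\overline{\mathcal{S}}_{10}$ supplies the requisite even spin structures $\theta_N$ needed to run the correspondence.
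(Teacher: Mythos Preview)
Your argument is correct and takes essentially the same approach as the paper: both use Proposition~\ref{dixonlemma} to identify $\mathbb{G}^{\delta}/\!\!/PGL(6)$ with a dense locally closed subset of $\overline{\mathcal{PS}}^+_{6,\delta}$, and then pull back the classical codimension-$\delta$ statement for the Severi variety of $\delta$-nodal plane sextics from \cite[Section~4.7]{Sernesi06}. The only difference is in packaging: the paper carries out the transfer by locally trivializing the universal plane $u:\mathbb{U}\to\mathbb{G}$ near a general point $n_{\Gamma}$ and reading the family of discriminants directly as a family of sextics in a fixed $\mathbb{P}^2$, whereas you route the same comparison through the chain $\mathbb{G}\to\mathcal{N}_5\cong\overline{\mathcal{PS}}_6^+\to\overline{\mathcal{P}}_6\leftarrow|\mathcal{O}_{\mathbb{P}^2}(6)|$ of GIT quotients and finite covers.
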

    \begin{proof}
        Consider the GIT quotient of $\mathbb G^{\delta}$: by proposition \ref{dixonlemma}, this is a dense locally closed set in $\overline {\mathcal{PS}}^+_{6, \delta}$. The latter is the locus of $\delta$-nodal plane sextics with an even spin structure, in the moduli space $\overline {\mathcal S}^+_{10}$, since a plane sextic curve has arithmetic genus 10. In turn, $\overline {\mathcal{PS}}^+_{6, \delta}$ is a finite covering of the moduli space $\overline {\mathcal P}_{6, \delta}$ of $\delta$-nodal stable plane sextics, via the forgetful map $\chi_+$. This implies that, in the flat family of nodal or smooth curves
        \[
            \mathfrak{\Delta} = \{ \Delta_N = \Delta_5\cap N, \ N \in A \},
        \]
        over an open dense set $A \subset \mathbb G$, the general member $\Gamma$ of $\mathfrak{\Delta}$ over $A^{\delta} := A \cap \mathring {\mathbb G}^{\delta}$ is general in the Severi variety  of $\delta$-nodal curves of its supporting plane $N_{\Gamma}$. Then the points of $\mathrm{Sing} \ \Gamma$ impose $\delta$ independent conditions to the linear system of sextics of $N_{\Gamma}$ passing through  $\mathrm{Sing} \ \Gamma $ (cf. \cite[4.7.2]{Sernesi06}). Hence, the Severi variety of $\delta$-nodal sextics of $N_{\Gamma} $ is smooth at $\Gamma$ and of codimension $\delta$ in $\vert \mathcal O_{N_{\Gamma}}(6) \vert$. Now, since the universal plane $u:\mathbb U \longrightarrow \mathbb G$ is locally trivial, we can assume, up to shrinking $A$ around the point $n_{\Gamma}\in\mathbb{G}$ parametrizing the plane $N_{\Gamma}$, such a trivialization. Then the universal plane $u_A: \mathbb U_{\vert A} \longrightarrow A$ and the trivial bundle $p_1: A \times \mathbb P^2 \longrightarrow A$ are biregularly identified. The same is true, over $A$, for $\mathcal O_{\mathbb U}(1) = t^*\mathcal O_{\mathbb P^{20}}(1)$ and $p_2^* \mathcal O_{\mathbb P^2}(1)$, where $p_2:A\times\mathbb{P}^2\longrightarrow\mathbb{P}^2$ is the second projection. In particular, this biregular map of $\mathbb P^2$-bundles over $A$ identifies the family $\mathfrak \Delta$ of curves $\Delta_N$ over $A$ with a family of plane sextics in $A \times \mathbb P^2$. Thus, applying the previous argument to this family, it follows that its locus $A^{\delta}$ is smooth of codimension $\delta$ at the point $n_{\Gamma}$. Indeed, $A$ is smooth at $n_{\Gamma}$ and the curve $\Gamma$ satisfies the key condition that its singular points impose independent conditions to $\vert \mathcal O_{\mathbb P^2}(6) \vert$.
    \end{proof}

    The next proposition highlights the induced stratification of the Grassmannian $\mathbb{G}_X$.

    \begin{prop}\label{stratification}
        For a general smooth cubic fourfold $X$ there is a stratification of the Grassmannian $\mathbb{G}_X$ in terms of the number of the nodes on the discriminant curve $\Delta_N$. In particular, there exist quasi-projective subvarieties $\mathbb{G}_X^{\delta}\subset\mathbb{G}_X$ parametrizing the nets having discriminant sextic curve with $\delta$ nodes, where $\mathbb{G}_X=\mathbb{G}_X^0\supset \mathbb{G}_X^1\supset\ldots\supset \mathbb{G}_X^9$ and $codim_{\mathbb{G}_X}(\mathbb{G}_X^{\delta})=\delta$.
    \end{prop}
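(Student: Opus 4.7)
The plan is to pull back the stratification of $\mathbb{G}$ from Theorem \ref{mainthm2} to $\mathbb{G}_X$ via the incidence variety $\widetilde{\mathbb{G}}$, using the two projections $\widetilde{\tau}$ and $\widetilde{\gamma}$ of the diagram preceding Theorem \ref{10dimensionalfiber}. I would set $\mathbb{G}_X^{\delta}:=\mathbb{G}_X\cap \mathbb{G}^{\delta}$, with the intersection taken via the natural inclusion $\mathbb{G}_X\hookrightarrow\mathbb{G}$, and establish both the non-emptiness of $\mathbb{G}_X^{\delta}$ for $\delta\leq 9$ and the codimension count.

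First I would consider the pulled-back stratum $\widetilde{\mathbb{G}}^{\delta}:=\widetilde{\tau}^{-1}(\mathbb{G}^{\delta})$. Since by Theorem \ref{P10bundle} the morphism $\widetilde{\tau}$ is a locally trivial $\mathbb{P}^{10}$-bundle on an open dense subset of $\mathbb{G}$, and by Theorem \ref{mainthm2} every irreducible component of $\mathbb{G}^{\delta}$ has codimension $\delta$, the inverse image $\widetilde{\mathbb{G}}^{\delta}$ has codimension $\delta$ in $\widetilde{\mathbb{G}}$, hence dimension $64-\delta$.

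Next I would analyze the restriction $\widetilde{\gamma}\vert_{\widetilde{\mathbb{G}}^{\delta}}:\widetilde{\mathbb{G}}^{\delta}\longrightarrow\mathbb{P}^{55}$. For a general smooth cubic fourfold $X$, the fiber $\widetilde{\gamma}^{-1}(X)$ coincides with $\mathbb{G}_X$ and has dimension $9$. The desired equality $\mathrm{codim}_{\mathbb{G}_X}(\mathbb{G}_X^{\delta})=\delta$ is then equivalent to saying that $\widetilde{\gamma}\vert_{\widetilde{\mathbb{G}}^{\delta}}$ is dominant, because the relative dimension formula yields $\dim(\widetilde{\gamma}^{-1}(X)\cap\widetilde{\mathbb{G}}^{\delta})=(64-\delta)-55=9-\delta$ for general $X$, a value which is non-negative exactly when $\delta\leq 9$. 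The descending chain $\mathbb{G}_X=\mathbb{G}_X^0\supset\mathbb{G}_X^1\supset\cdots\supset\mathbb{G}_X^9$ then descends from the chain $\mathbb{G}^{\delta}\subset\mathbb{G}^{\delta-1}$.

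The main obstacle is establishing the dominance of $\widetilde{\gamma}\vert_{\widetilde{\mathbb{G}}^{\delta}}$ for each $\delta\leq 9$, equivalently the proper intersection of $\mathbb{G}_X$ with $\mathbb{G}^{\delta}$ inside $\mathbb{G}$ for general $X$. A tangent-space computation at a suitable point $(X_0,N_0)\in\widetilde{\mathbb{G}}^{\delta}$, where $\widetilde{\tau}$ is smooth, reduces the question to verifying that $T_{N_0}\mathbb{G}_{X_0}$ and $T_{N_0}\mathbb{G}^{\delta}$ are transverse in $T_{N_0}\mathbb{G}$, which is a genericity condition I expect to follow from the fact (implicit in Theorems \ref{10dimensionalfiber} and \ref{P10bundle}) that the nine-dimensional subvarieties $\{\mathbb{G}_X\}_{X\in\mathring{\mathbb{P}}^{55}}$ form a covering family of $\mathbb{G}$ with $\mathbb{P}^{10}$-fibers, and hence sweep out $\mathbb{G}$ with enough freedom to meet any fixed codimension-$\delta$ subvariety transversally for general $X$. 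A concrete check, in the spirit of Example \ref{example}, exhibiting explicit polar nets whose discriminant sextic has exactly $\delta$ nodes for each $\delta\leq 9$, would suffice to secure this step.
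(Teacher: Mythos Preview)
The paper does not supply a separate proof of this proposition; it is stated as a direct consequence of Theorem~\ref{mainthm2}, with the remark immediately following confirming that $\mathbb{G}_X^{\delta}=\mathbb{G}^{\delta}\cap\mathbb{G}_X$, and the paper later remarks (just before Theorem~\ref{isolated}) that one argues for $\mathbb{G}_X$ exactly as for $\mathbb{G}$. Your approach via the incidence variety $\widetilde{\mathbb{G}}$ and dominance of $\widetilde{\gamma}\vert_{\widetilde{\mathbb{G}}^{\delta}}$ is a correct reformulation of the same content, and the tangent-space transversality you identify as the key step is precisely what the paper carries out explicitly in the proof of Theorem~\ref{isolated} (there for $\delta=10$): after trivializing the universal plane over a neighborhood of $n$ in $\mathbb{G}_X$, the tangent space to $\mathbb{G}_X$ at $n$ is $H^0(\mathcal{O}_{\Delta_N}(1)^{\oplus 3})$, and one checks that its image in $H^0(\mathcal{O}_{\mathrm{Sing}\,\Delta_N})$ has rank $\delta$ when $\Delta_N$ is general in its Severi variety.

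One correction: your final sentence overstates what an explicit example buys. Exhibiting, for some fixed $X_0$, a polar net $N_0$ whose discriminant has exactly $\delta$ nodes shows only that $(X_0,N_0)\in\widetilde{\mathbb{G}}^{\delta}$; it gives neither the codimension in $\mathbb{G}_{X_0}$ nor non-emptiness of $\mathbb{G}_X^{\delta}$ for \emph{general} $X$. The Kleiman-type argument you allude to (and which the paper invokes in Proposition~\ref{kleiman} for $\delta=10$) guarantees that for general $X$ the intersection $\mathbb{G}_X\cap\mathbb{G}^{\delta}$ is either empty or of the expected dimension $9-\delta$, but it does not by itself rule out emptiness. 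To actually close the gap you still need the tangent-space check at the exhibited point---showing that $d\widetilde{\gamma}$ restricted to $\widetilde{\mathbb{G}}^{\delta}$ is surjective there---which is your first route and the paper's.
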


    \begin{rmk}
        It follows from Theorem \ref{mainthm2} that $\mathbb{G}_X^{\delta}=\mathbb{G}^{\delta}\cap\mathbb{G}_X$. In general, every stratum $\mathbb{G}_X^{\delta}\subset \mathbb{G}_X$ contains both nets with discriminant curve with nodes parametrizing rank 4 quadrics and nets with discriminant curve with nodes parametrizing rank 5 quadrics. Indeed, $\Delta_N=N\cap\Delta_5^X$ admits a node at a point if and only if either that point is singular for $\Delta_5^X$ or if $N$ and $\Delta_5^X$ are tangent at that point. In particular, the nodes obtained as tangency points between $N$ and the smooth locus of $\Delta_5^X$ parametrize quadrics of rank 5, while the nodes coming from the intersection between $N$ and $\Delta_4^X$ parametrize quadrics of rank 4. Also, a curve $\Delta_N$ can have both nodes parametrizing rank 5 quadrics and nodes parametrizing rank 4 quadrics.
            
        Moreover, proposition \ref{stratification} points out that a general smooth cubic fourfold $X$ admits nets of polar quadrics with discriminant sextic curve with up to 9 nodes. However, as in the proposition \ref{smoothsextic}, the general net has smooth discriminant curve.
    \end{rmk}

\subsection{The Severi hypersurface $\mathsf{D}_{sn}$ in $\mathbb P^{55}$}\label{Severihypersurface}

    Let $P$ be any projective plane.
    
    \begin{notation}
        We will use the notation $V_{m,\delta}(P)\subset|\mathcal{O}_P(m)|$ for the Severi variety of plane curves of degree $m$ with at least $\delta$ nodes.
    \end{notation}
    
    The Severi variety $V_{m,\delta}(P)$ is the closure of the family of all the $\delta$-nodal curves in $P$ of degree $m$, that is, nodal plane curves $\Gamma$ such that $\vert \mathrm{Sing} \ \Gamma \vert \geq \delta$. The Severi variety is not irreducible. Indeed, it admits an irreducible component $V_{m,\delta}^{irr}(P)$, whose general member is an integral curve. The other components are the images of the various products
    \[
        V_{m',\delta'}^{irr}(P)\times V_{m'', \delta ''}^{irr}(P)
    \]
    in $V_{m,\delta}(P)$ via the sum map, where $m'+m''=m$ and $\delta'+\delta ''=\delta$. Hence, the $V_{m',\delta'}^{irr}(P)$'s correspond to the ways a curve may split up (see \cite{Ran87} and \cite{Ran87a} for a detailed description). The irreducibility of any Severi variety of $\delta$-nodal irreducible plane curves of degree $m$, $V_{m,\delta}^{irr}$, is well known (see \cite{Harris86}). More precisely, the family is empty for $\delta > \binom{m}{2}$, because $\binom{m}{2}$ is the maximal number of nodes, realized by the union of $m$ lines in general position. Otherwise, for $\delta\leq\binom{m}{2}$, its general member $\Gamma$ has exactly $\delta$ nodes and it is irreducible if its geometric genus is $g(\Gamma)\geq 0$.
    
    For the purpose of this work, in what follows we only deal with $V_{6,10}^{irr}(P)$. It is integral of codimension $10$ in $\vert \mathcal O_{\mathbb P^2}(6) \vert$, with general member an irreducible rational sextic with $10$ nodes.
    
    \begin{defn}\label{specialpolarnet}
        A cubic fourfold $X \in\mathbb P^{55}$ admits a \textit{special polar net} $N$ if the discriminant locus $\Delta_N$ is a $10$-nodal plane sextic.
    \end{defn}
    \begin{defn}
        We call Severi locus the set in $\mathbb P^{55}$ of cubic fourfolds with a special polar net. We denote it by $\mathsf{D}_{sn} \subset \mathbb P^{55}$.
   \end{defn}
    A parameter count predicts that $\mathsf{D}_{sn}$ is a hypersurface, and we will prove this in what follows. However, some preliminary remarks are necessary. At first, let us point out that, by Theorem \ref{mainthm2}, the Severi variety $\mathbb G^{10}$ of $\Delta_5$ is equidimensional and such that
   \[
    \dim \mathbb G^{10} = 44 = \dim PGL(6) + \dim \overline {\mathcal{PS}}_{6,10}^+.
    \]
    As already seen in the proof of Theorem \ref{mainthm2}, the quotient $\overline {\mathcal{PS}}_{6,10}^+=\mathbb G^{10} /\!\!/ PGL(6)$ is the coarse moduli space of $10$-nodal plane sextics in $\mathbb P^2$, endowed with an even spin structure. This space dominates the moduli space $\overline{\mathcal P}_{6,10}$ of $10$-nodal plane sextics, via the forgetful map $\chi_+$, which is finite. 
    
    Now, let $n \in \mathbb G$ be the point corresponding to the net $N \subset \mathbb P^{20}$ such that it is general in $\mathbb G^{10}$. Then, we can assume that $\Delta_N$ is general in the Severi variety $V_{6,10}^{irr}(N)$ of the plane $N$. Hence, we can also assume the maximal rank  for  the restriction
    \[
        r: H^0(\mathcal O_{N}(6)) \longrightarrow H^0(\mathcal O_{\mathrm{Sing}\Delta_N}(6)),
    \]
    that is, the surjectivity. Now, consider again the morphisms
    \[\begin{tikzcd}
        & \widetilde {\mathbb G} \arrow[dl,"\widetilde{\gamma}"']\arrow[dr, "\widetilde{\tau}"] &\\
        \mathbb{P}^{55} & & \mathbb{G}.
    \end{tikzcd}\]
    The fiber of $\widetilde \tau$ at the point $n$ is $\mathbb F \times \{ n \}  := \left(\mathbb P^{55} \times \{ n \}\right) \cdot \widetilde{\mathbb G} \subset \mathbb P^{55} \times \mathbb G$. Under our generality assumption on $N$, we can assume that the family of smooth cubic fourfolds  $X \in \mathbb F$ is nonempty. This is indeed an open property over $\mathbb G^{10}$ and it is satisfied over a general net of the Grassmannian  $\mathbb G _{X_{\mathsf f}}$ of the Fermat cubic fourfold $X_{\mathsf f}:=V(\sum_{i=0}^5 x_i^3)$, as we will see later. At first, we want to exclude effective 1-dimensional deformations of $\Delta_N$, in the Grassmannian $\mathbb G_X \subset \mathbb G$, which preserves the ten nodes of $\Delta_N$. We can work for $\mathbb G_X$ exactly as we did for $\mathbb G$ in the proof of Theorem \ref{mainthm2} (cf. proposition \ref{stratification}).
    
    \begin{thm}\label{isolated}
        Let $n$ be the point defined by $N$, then $n$ is isolated in $\mathbb G_X^{10}$.
    \end{thm}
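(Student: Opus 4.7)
The plan is to run the tangent-space argument from the proofs of Theorem \ref{mainthm2} and Proposition \ref{stratification}, now within the $9$-dimensional $\mathbb{G}_X$ and at the level $\delta=10$. Since $\dim \mathbb{G}_X = 9 < 10$, the expected dimension of $\mathbb{G}_X^{10}$ at $n$ is negative, and isolation is a transversality statement: the $10$ nodal conditions, restricted to $T_n \mathbb{G}_X$, must have full rank $9$.

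Concretely, I would take a neighborhood $A$ of $n$ in $\mathbb{G}$ trivializing the universal plane, $\mathbb{U}|_A \simeq A \times \mathbb{P}^2$, so that the family of discriminants $\mathfrak{\Delta}|_A$ identifies with a family of plane sextics in a fixed $\mathbb{P}^2$ and produces a morphism $\phi: A \to |\mathcal{O}_{\mathbb{P}^2}(6)| \simeq \mathbb{P}^{27}$. By the assumed generality of $N$ in $\mathbb{G}^{10}$, the sextic $\Delta_N$ is general in $V_{6,10}^{irr}(\mathbb{P}^2)$, its $10$ nodes impose independent conditions on plane sextics, and the evaluation map
\[
\mathrm{ev}\circ d\phi:\ T_n A\ \longrightarrow\ H^0(\mathcal{O}_{\mathrm{Sing}\,\Delta_N}(6))\ \simeq\ \mathbb{C}^{10}
\]
is surjective with kernel $T_n \mathbb{G}^{10}$. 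The tangent space $T_n \mathbb{G}_X^{10}$ sits inside the kernel of the further restriction $(\mathrm{ev}\circ d\phi)|_{T_n \mathbb{G}_X}$, and the theorem reduces to showing this restriction is injective.

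For the injectivity I would verify that already $9$ of the $10$ evaluation functionals are independent on $T_n \mathbb{G}_X$. For each $i\in\{1,\dots,10\}$, the locus $\mathbb{G}_X^{(i)} \subset \mathbb{G}_X$ of polar nets whose discriminant is nodal at each of the $9$ points $\mathrm{Sing}\,\Delta_N \setminus \{p_i\}$ is a closed subscheme, and the Sernesi-type argument in the proof of Proposition \ref{stratification}, applied to this $9$-point configuration, shows that $\mathbb{G}_X^{(i)}$ has codimension $9$ at $n$. Equivalently, the $9$ evaluation functionals at $\mathrm{Sing}\,\Delta_N\setminus\{p_i\}$ are linearly independent on $T_n \mathbb{G}_X$; hence $(\mathrm{ev}\circ d\phi)|_{T_n \mathbb{G}_X}$ has rank at least $9$, and since the source has dimension $9$ it is injective, so $T_n \mathbb{G}_X^{10} = 0$.

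The main obstacle is the independence of the $9$-node conditions on $T_n \mathbb{G}_X$ itself and not merely on $T_n \mathbb{G}$. Unlike in the proof of Theorem \ref{mainthm2}, Bertini is not available directly since $\mathbb{G}_X$ is a fixed $9$-dimensional subvariety; the argument requires that the $9$-plane $T_n \mathbb{G}_X \subset T_n \mathbb{G}$ be generic with respect to the $10$ hyperplanes cut by evaluation at the nodes. This genericity is precisely what is ensured by Theorem \ref{P10bundle}, which exhibits a positive-dimensional fiber $\widetilde{\tau}^{-1}(n) \simeq \mathbb{P}^{10}$ of cubic fourfolds having $N$ as a polar net: varying $X$ in this fiber lets us realize $T_n \mathbb{G}_X$ generically in $T_n \mathbb{G}$, so that the ten evaluation conditions restrict to rank exactly $9$ and $n$ is indeed isolated in $\mathbb{G}_X^{10}$.
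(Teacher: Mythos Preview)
Your reduction to the injectivity of the restricted evaluation map
\[
(\mathrm{ev}\circ d\phi)\big|_{T_n\mathbb G_X}:\ T_n\mathbb G_X\ \longrightarrow\ \mathbb C^{10}
\]
is correct and is essentially the same first step as in the paper. The problem is the justification of this injectivity; both of your attempts have gaps.

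The intermediate argument via the loci $\mathbb G_X^{(i)}$ is circular. Proposition~\ref{stratification} gives $\mathrm{codim}_{\mathbb G_X}\mathbb G_X^{\delta}=\delta$ only for a \emph{general} smooth $X$, whereas here $X$ is by hypothesis special: $\mathbb G_X^{10}\neq\varnothing$. So you cannot simply invoke that proposition to conclude that nine of the ten nodes already impose independent conditions on $T_n\mathbb G_X$; that is precisely the kind of statement the theorem is meant to establish.

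The appeal to Theorem~\ref{P10bundle} does not close the gap either. Knowing that $\widetilde\tau^{-1}(n)\simeq\mathbb P^{10}$ only tells you that a $10$-dimensional family of $9$-planes $T_n\mathbb G_X=\mathrm{Hom}(W,J_X/W)$ sits inside the $54$-dimensional $T_n\mathbb G=\mathrm{Hom}(W,H^0(\mathcal O_{\mathbb P^5}(2))/W)$; this is nowhere near ``generic'' in $Gr(9,T_n\mathbb G)$. Concretely, as $X$ ranges over the fiber the $3$-dimensional subspace $J_X/W\subset H^0(\mathcal O_{\mathbb P^5}(2))/W$ moves only inside a small linear subspace determined by the polarity constraints (compare Example~\ref{example}, where the moving part of $J_X$ consists of quadrics in three of the variables). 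There is no a priori reason this constrained family of $9$-planes must be transverse to the fixed $44$-dimensional subspace $T_n\mathbb G^{10}=\ker(\mathrm{ev}\circ d\phi)$.

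The paper proceeds differently: it works intrinsically inside $|J_X|\cong\mathbb P^5$ rather than inside $\mathbb G$. Viewing $\Delta_N$ as a complete intersection of type $(1,1,1,6)$ in $|J_X|$, it identifies $T_n\mathbb G_X$ with $H^0(\mathcal O_{\Delta_N}(1)^{\oplus 3})$, the infinitesimal deformations of the supporting plane, and then uses the Lichtenbaum--Schlessinger sequence $\mathcal T_{\mathbb P^5}\to\mathcal N_N\to T^1_{\Delta_N}$ to produce directly a map
\[
h:\ H^0(\mathcal O_{\Delta_N}(1)^{\oplus 3})\ \longrightarrow\ H^0(\mathcal O_{\mathrm{Sing}\,\Delta_N}(3)).
\]
Injectivity of $h$ is then deduced from the generality of $N$ in $\mathbb G^{10}$, with no need to vary $X$ in the fiber.
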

    \begin{proof}
        Let us restrict the universal plane $u:\mathbb U \longrightarrow \mathbb G$ over the Grassmannian \mbox{$\mathbb G_X \subset \mathbb G$}. Then, let us trivialize it along an open neighborhood $A_X$ of $n$. Therefore the family of the plane sections of  $\Delta_5$ over $A_X$ is identified with a family of plane sextics in $A_X \times \mathbb{P}^2$,  with fiber the curve $\Delta_N$ at the point $n \in A_X$. Now, as in the proof of Theorem \ref{mainthm2}, $\dim A_X = 9$ and, moreover, the tangent space to $A_X$ at $n$ is the space of global sections of $\mathcal{O}_{\Delta_N}(1)^{\oplus 3}$. Indeed, $\Delta_N$ is a complete intersection of type $(1,1,1,6)$ in $|J_X|=\mathbb{P}^5$ and its normal bundle is $\mathcal{N}_N:=\mathcal{O}_{\Delta_N}(1)^{\oplus 3}\oplus\mathcal{O}_{\Delta_N}(6)$. In particular, $H^0(\mathcal{O}_{\Delta_N}(1)^{\oplus 3})$ is the space of infinitesimal deformations of the plane supporting $\Delta_N$. As previously done, let us consider, as in \cite{HartshorneHirschowitz85} and \cite{Sernesi84}, the exact sequence
        \[
            \mathcal{T}_{\mathbb{P}^5}\longrightarrow\mathcal{N}_N\longrightarrow T^1_{\Delta_N},
        \]
        defining the Lichtenbaum-Schlessinger sheaf $T^1_{\Delta_N}$. Since $\Delta_N$ is nodal, this corresponds to $\mathcal{O}_{\mathrm{Sing}\Delta_N}$. Moreover, the exact sequence induces on $\mathcal{O}_{\Delta_N}(1)^{\oplus 3}$ a homomorphism
        \[
            h: H^0(\mathcal O_{\Delta_N}(1)^{\oplus 3}) \longrightarrow H^0(\mathcal O_{\mathrm{Sing} \Delta_N}(3)).
        \]
        This is injective, since $N$ is general in $\mathbb G^{10}$, and this implies that $\Delta_N$ is an isolated point in the family $\mathbb G_X^{10}$ of 10-nodal discriminants of the polar nets of $X$.
    \end{proof}
    
    In other words, if we consider a general $\mathbb G_X$ having the property of intersecting $\mathbb G^{10}$ at the point $n$, then, by the injectivity of the previous map $h$, it follows that the plane section $\Delta_N$ is parametrized by a smooth, isolated point in the Severi variety of $10$-nodal plane sections of $\Delta_5^X$. Now, to make a further step in constructing the so-called Severi hypersurface, we consider the pull-back of the universal plane $u: \mathbb U \longrightarrow \mathbb G$ by $\widetilde \tau$:
    \[\begin{tikzcd}
        \widetilde{\mathbb{U}}:=\widetilde{\tau}^*\mathbb{U}\arrow[r]\arrow[d,"\widetilde{u}"] & \mathbb{U}\arrow[d,"u"]\\
        \widetilde{\mathbb{G}}\arrow[r,"\widetilde{\tau}"] & \mathbb{G}.
    \end{tikzcd}\]
    This is a $\mathbb P^2$-bundle over $\widetilde {\mathbb G}$ and the fiber of $\widetilde{u}$ at the point $o := (X,N)\in\widetilde{\mathbb{G}}$ is the projective plane $\widetilde {\mathbb U}_o = N$.
    
    \begin{defn}
        Let us set $\mathbb{S}:=\mathbb{P}(\widetilde{u}_*\mathcal{O}_{\widetilde{\mathbb{U}}}(6))$, where $\mathcal{O}_{\widetilde{\mathbb{U}}}(1):=\widetilde{\tau}^*\mathcal{O}_{\mathbb{U}}(1)$.
    \end{defn}
    
    $\mathbb S$ is a $\mathbb P^{27}$-bundle over $\widetilde {\mathbb G}$. Indeed, if $\sigma: \mathbb S \longrightarrow \widetilde{\mathbb G}$ is its structure map, then the fiber of $\sigma$ at $o = (X,N)\in\widetilde{\mathbb{G}}$ is $\mathbb{S}_o=\mathbb P^{27} = \vert \mathcal O_N(6) \vert$, that is, the linear system of all the sextic curves of the plane $N$.
    
    \begin{defn}
        Let us define the following loci in $\mathbb S$:
        \begin{itemize}
            \item $\mathsf V$ is the locus in $\mathbb S$ whose fiber $\mathsf V_o \subset \mathbb S_o$ at $o = (X,N) \in \widetilde {\mathbb G}$ is the Severi variety $V_{6,10}^{irr}(N)$ of the $10$-nodal irreducible plane sextics of $N$.
            \item $\mathsf N$ is the closure, in $\mathbb S$, of the locus whose fiber $\mathsf N_o \subset \mathbb S_o $ at $o = (X,N) \in \widetilde {\mathbb G}$ is the point parametrizing the curve $\Delta_5^X \cap N$, provided that this intersection is proper.
        \end{itemize}
    \end{defn}
    
    Notice that $\mathsf N$ is the image, in $\mathbb S$, of a rational section $\mathsf n: \widetilde {\mathbb G} \longrightarrow \mathbb S$, hence it is irreducible of dimension $64$. On the other hand, $\mathsf V$ is irreducible too because each of its fibers $\mathsf V_o$ is irreducible of constant dimension. Also, $\mathsf V$ has codimension $10$ in $\mathbb S$.
    
    \begin{defn} 
        Let us set $ \mathsf D := \sigma(\mathsf N \cdot \mathsf V)\subset\widetilde{\mathbb{G}}$.
    \end{defn}
    
    By a dimension count, every irreducible component $\mathsf Z$ of $\mathsf D$ satisfies $\dim \mathsf Z \geq 54$.
    
    \begin{prop}\label{kleiman}
        No irreducible component $\mathsf Z$ of $\mathsf{D}$ dominates $\mathbb P^{55}$ via the projection morphism $\widetilde \gamma: \widetilde {\mathbb G} \longrightarrow \mathbb P^{55}$.
    \end{prop}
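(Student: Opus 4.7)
The plan is to argue by contradiction: suppose some irreducible component $\mathsf{Z} \subseteq \mathsf{D}$ is sent onto a dense subset of $\mathbb{P}^{55}$ by $\widetilde{\gamma}$, and obtain two incompatible bounds on $\dim \mathsf{Z}$ by exploiting in turn the two projections $\widetilde{\gamma}$ and $\widetilde{\tau}$.

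A preliminary observation I would record is that $\mathsf{D} \subseteq \widetilde{\tau}^{-1}(\mathbb{G}^{10})$: for any $(X,N) \in \widetilde{\mathbb{G}}$ the discriminant curve $\Delta_N = N \cap \Delta_5^X$ coincides with the plane section $N \cap \Delta_5$ taken in $|\mathcal{O}_{\mathbb{P}^5}(2)|$, so the Severi condition $\Delta_N \in V_{6,10}^{irr}(N)$ that defines $\mathsf{D}$ depends only on $N$ and forces $N \in \mathbb{G}^{10}$. In particular $\widetilde{\tau}(\mathsf{Z}) \subseteq \mathbb{G}^{10}$, whose dimension is $44$ by Theorem \ref{mainthm2}.

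The first estimate exploits Theorem \ref{isolated}. A general point $(X_0, N_0) \in \mathsf{Z}$ projects via $\widetilde{\tau}$ to a general point $n_0$ of a component of $\mathbb{G}^{10}$ dominated by $\widetilde{\tau}|_{\mathsf{Z}}$; by Theorem \ref{P10bundle} one can further assume $X_0$ is general in the $10$-dimensional fiber $\widetilde{\tau}^{-1}(n_0) \cong \mathbb{P}^{10}$. Theorem \ref{isolated} then gives that $N_0$ is isolated in $\mathbb{G}_{X_0}^{10} = \widetilde{\gamma}^{-1}([X_0]) \cap \widetilde{\tau}^{-1}(\mathbb{G}^{10})$, and hence isolated in the smaller set $\widetilde{\gamma}^{-1}([X_0]) \cap \mathsf{Z}$. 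By upper semicontinuity of fiber dimension, the generic fiber of $\widetilde{\gamma}|_{\mathsf{Z}}$ is zero-dimensional, so $\widetilde{\gamma}|_{\mathsf{Z}}$ is generically finite onto its image; combined with the domination assumption, this forces $\dim \mathsf{Z} = 55$.

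The second estimate uses $\widetilde{\tau}|_{\mathsf{Z}}$: since $\widetilde{\tau}$ is a locally trivial $\mathbb{P}^{10}$-bundle over a dense open $A \subseteq \mathbb{G}$ by Theorem \ref{P10bundle}, the generic fiber of $\widetilde{\tau}|_{\mathsf{Z}}$ has dimension at most $10$, giving $\dim \mathsf{Z} \leq \dim \widetilde{\tau}(\mathsf{Z}) + 10 \leq 44 + 10 = 54$ and contradicting the previous bound. The main obstacle is to ensure that the general point of $\widetilde{\tau}(\mathsf{Z})$ actually lies in $A$, so that the fibers of $\widetilde{\tau}$ above it are genuinely $10$-dimensional and the upper bound $\dim \mathsf{Z} \leq 54$ is valid; I would resolve this by combining the density of $A$, the explicit Fermat cubic fourfold construction announced in the paper (which furnishes a point of $\mathbb{G}^{10}$ inside the bundle locus), and the spin-cover description $\mathbb{G}^{10} /\!\!/ PGL(6) = \overline{\mathcal{PS}}_{6,10}^+$ that controls the components of $\mathbb{G}^{10}$.
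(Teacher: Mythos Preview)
Your argument and the paper's are driven by the same dimension count, but packaged differently: the paper invokes Kleiman's transversality directly, observing that the $9$-dimensional Grassmannians $\mathbb G_X$ form a family over $\mathring{\mathbb P}^{55}$ mapping (via $\widetilde\tau$) flatly onto an open set of $\mathbb G$, so for general $X$ the fiber $\mathbb G_X$ cannot meet the codimension-$10$ locus $\mathbb G^{10}$; your step~3 is the same count phrased from the $\widetilde\tau$-side. Two points deserve correction.

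First, your ``first estimate'' via Theorem~\ref{isolated} is superfluous and not properly justified. Domination alone already gives $\dim\mathsf Z\geq 55$, which is all you need to contradict $\dim\mathsf Z\leq 54$; there is no reason to sharpen this to an equality. Moreover, Theorem~\ref{isolated} is stated for $N$ \emph{general in $\mathbb G^{10}$}, whereas you only know $n_0$ is general in the closure of $\widetilde\tau(\mathsf Z)$; you never establish that $\widetilde\tau|_{\mathsf Z}$ dominates a component of $\mathbb G^{10}$, so the hypotheses of that theorem are not in hand. Simply drop this step.

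Second, the ``main obstacle'' you flag in step~3 is exactly where the paper's appeal to Kleiman earns its keep. Your proposed workaround (Fermat example plus the spin-cover description) is not convincing as stated: the Fermat plane section is a $15$-nodal configuration, not a point of $\mathring{\mathbb G}^{10}$, and you would still need to argue that \emph{every} component of $\widetilde\tau(\mathsf Z)$ meets $A$, not just one. The paper sidesteps this by citing \cite[Lemma~1]{Kleiman74}, which gives the expected-dimension bound for the general fiber $\mathbb G_X\cap\mathbb G^{10}$ directly from the flatness of $\widetilde\tau$ over $A$; this is both shorter and avoids the component-by-component analysis. If you want an elementary substitute, the cleanest route is to note that your preliminary observation actually gives $\mathsf D=\widetilde\tau^{-1}(W)$ for a locus $W\subseteq\mathbb G^{10}$ depending only on $N$; over $A$ this is a $\mathbb P^{10}$-bundle over $W\cap A$, so any component of $\mathsf D$ meeting $\widetilde\tau^{-1}(A)$ has dimension $\leq 54$, while components contained in $\widetilde\tau^{-1}(\mathbb G\setminus A)$ cannot dominate $\mathbb P^{55}$ since $\widetilde\gamma$ restricted to the proper closed set $\widetilde\tau^{-1}(\mathbb G\setminus A)\subsetneq\widetilde{\mathbb G}$ already fails to dominate.
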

    \begin{proof}
        The statement actually follows from Kleiman's work on the transversality of a general translate \cite{Kleiman74}. We have a family of $9$-dimensional Schubert varieties $\mathbb G_X$ in $\mathbb G$, parametrized by $\mathbb P^{55}$. This family maps onto $\mathbb G$ with the morphism $\widetilde \tau$, which is proper and flat over a nonempty open set $A \subset \mathbb G$, as seen in the remark \ref{importantrmk}. Assume, by contradiction, that $\mathsf Z$ dominates $\mathbb P^{55}$ via $\widetilde \gamma$. This implies that, for a general $X \in \mathring{\mathbb P}^{55}$, the Schubert variety $\mathbb G_X$ intersects the closed set $\mathbb G^{10}$, whose codimension in $\mathbb G$ is $10$. This is against Kleiman's result. Indeed, the family of Schubert varieties is $\mathring {\mathbb P}^{55}$ and, by applying \cite[Lemma 1]{Kleiman74}, we should have
        \[
            \dim \mathbb G_X + \dim \mathbb G^{10} - \dim \mathring {\mathbb P}^{55} \geq 0.
        \]
        But we actually have $\dim \mathbb G_X + \dim \mathbb G^{10} - \dim \mathring {\mathbb P}^{55} = -1$, which is a contradiction.
    \end{proof}
    
    Now, Theorem \ref{isolated} implies that there exist irreducible components $\mathsf Z$ of $\mathsf{D}$ having smooth, $0$-dimensional intersection with the fibers of the Grassmann bundle \mbox{$\widetilde \gamma: \widetilde{\mathbb G} \longrightarrow \mathbb P^{55}$}. Notice that, for each irreducible $\mathsf Z$ with this property, the morphism $\widetilde \gamma_{\vert \mathsf Z}$ is generically finite onto the image. This remark, and the proposition \ref{kleiman}, then imply that
    \[
        \dim \widetilde \gamma(\mathsf Z) = \dim \mathsf Z = 54.
    \]
    
    \begin{defn}
        The \textit{Severi hypersurface} $\mathsf D_{sn}$ in $\mathbb P^{55}$ is the union of the above described hypersurfaces $\widetilde{\gamma}(\mathsf Z)$.
    \end{defn}
    
    Notice that the condition for $X \in \mathring{\mathbb P^{55}}$ to have discriminant hypersurface \mbox{$\Delta_5^X \subset \vert J_X \vert$} which contains a $10$-nodal plane sextic is invariant under the action of $PGL(6)$ on the space $\mathbb P^{20} = \vert \mathcal O_{\mathbb P^5}(2) \vert$. This implies the following result.
    
    \begin{thm}
        The closure of the GIT quotient $\mathsf D_{sn}  /\!\!/PGL(6)$ is a divisor in $\mathcal C$.
    \end{thm}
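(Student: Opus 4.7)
The plan is to reduce the statement to two facts already in place: (i) by the dimension count just performed, every irreducible component $\widetilde\gamma(\mathsf Z)$ of $\mathsf D_{sn}$ has dimension $54$ in $\mathbb P^{55}$, so $\mathsf D_{sn}$ is a hypersurface; and (ii) by the remark immediately preceding the statement, the condition of admitting a $10$-nodal plane section of $\Delta_5^X$ is invariant under the natural $PGL(6)$-action on $\mathbb P^{55}$ (induced by the action on $\mathbb P^{20}=|\mathcal O_{\mathbb P^5}(2)|$), so $\mathsf D_{sn}$ is $PGL(6)$-stable.

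First, I would observe that $\mathsf D_{sn}$ meets the GIT-stable locus of $\mathring{\mathbb P}^{55}$ in a dense open subset. This amounts to exhibiting a \emph{smooth} cubic fourfold in $\mathsf D_{sn}$: the paper guarantees one via the Fermat cubic $X_F=V(\sum x_i^3)$, where the Grassmannian $\mathbb G_{X_F}$ contains a net $N$ with $10$-nodal discriminant, and smoothness is an open condition on the $\mathbb P^{10}$-fiber of $\widetilde\tau$ around any such $N$. A smooth cubic fourfold has finite automorphism group and is GIT-stable for the $PGL(6)$-action on $|\mathcal O_{\mathbb P^5}(3)|$.

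Second, I would perform the dimension count on the quotient. Since $\dim PGL(6)=35$ and the stabilizer of a general point of $\mathsf D_{sn}$ is finite, the orbit map is of maximal rank on a nonempty open $PGL(6)$-invariant subset of $\mathsf D_{sn}$, so the geometric quotient there has dimension $54-35=19$. On the other hand, $\dim\mathcal C = 55-35=20$. Thus $\mathsf D_{sn}/\!\!/PGL(6)$ has codimension $1$ in $\mathcal C$, and taking closure in the projective compactification $\overline{\mathcal C}$ preserves codimension.

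The main obstacle is the first step: one must be sure that $\mathsf D_{sn}$ is not swallowed by the unstable locus (where semistability of the cubic fourfold fails) or by the closed set where $\dim J_X<6$. Once at least one smooth cubic in $\mathsf D_{sn}$ is produced, the remainder of the argument is formal. The verification via the forthcoming Fermat cubic discussion is what makes this step essentially automatic, so the overall proof is short.
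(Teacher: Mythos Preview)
Your proposal is correct and follows essentially the same route as the paper, which does not give a formal proof here at all: the theorem is stated as an immediate consequence of the two facts you isolate, namely that $\mathsf D_{sn}$ is a $PGL(6)$-invariant hypersurface in $\mathbb P^{55}$. Your extra care about the stable locus and finite stabilizers (via a smooth cubic fourfold in $\mathsf D_{sn}$, supplied by the Fermat example) makes explicit what the paper leaves implicit, but the argument is the same.
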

    
    \begin{defn}
        We call the closure of the GIT quotient $\mathsf D_{sn}  /\!\!/PGL(6)$ the \textit{Severi divisor} of $\mathcal C$ and denote it by $\mathcal D_{sn}$.
    \end{defn}

\subsection{Modular constructions}\label{modularconstruction}

    It follows from the previous sections that the Grassmann bundle $\widetilde{\gamma}:\widetilde{\mathbb{G}}\longrightarrow\mathbb{P}^{55}$ induces a Grassmann bundle over the moduli space $\mathcal{C}$ of cubic fourfolds. Indeed, $\widetilde{\gamma}$ is equivariant under the action of $PGL(6)$. We denote this bundle by $\gamma:\mathcal{G}\longrightarrow\mathcal{C}$. More precisely, $\mathcal{G}:=\widetilde{\mathbb{G}}/\!\!/PGL(6)$ and $\gamma^{-1}([X])\cong\mathbb{G}_{[X]}=Gr(3,J_{[X]})$, where $J_{[X]}$ is the vector space of the projective transforms of the polar quadrics of $X$. Finally, $\mathrm{dim} \ \mathcal{G}=\mathrm{dim}\widetilde{\mathbb{G}}-\mathrm{dim}PGL(6)=64-35=29$.
    
    We denote by $\mathcal{N}=\mathbb{G}/\!\!/PGL(6)$ the moduli space of nets of quadric hypersurfaces in $\mathbb{P}^5$, as in section \ref{thedixonlemma}. The morphism $\widetilde{\tau}:\widetilde{\mathbb G}\longrightarrow\mathbb G$ is also equivariant for the action of the group $PGL(6)$; hence, there is an induced modular map $\tau:\mathcal{G}\longrightarrow \mathcal{N}$ that is still well defined and surjective. Moreover, $\mathrm{dim} \ \mathcal{N}=\mathrm{dim}\mathbb{G}-\mathrm{dim}PGL(6)=54-35=19$ and the general fiber of $\tau$ has still dimension 10.

     The next diagram collects what has been analyzed so far:

    \[\begin{tikzcd}\label{diagram}\tag*{$(\diamond)$}
        \mathcal{G}\arrow[d,"\gamma"']\arrow[r,"\tau"] & \mathcal{N}\arrow[d,"\psi"']\arrow[r,"\nu"] & \overline{\mathcal{PS}}_6^+\arrow[dl,"\chi_+"]\\
        \mathcal{C} &\overline{\mathcal{P}}_6 &
    \end{tikzcd}\]
    
    We point out that $\deg \chi_+ = \deg \psi= 2^{10}(2^9 + 1) = 524800$.
    
    \begin{thm}\label{irreduciblecomponentsoftheseveridivisor}
        The Severi divisor $\mathcal{D}_{sn}$ is irreducible.       
    \end{thm}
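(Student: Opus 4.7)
The plan is to reduce the statement to the irreducibility of a suitable spin moduli space, exploiting the $\mathbb{P}^{10}$-bundle structure of Theorem \ref{P10bundle} and the Dixon correspondence of proposition \ref{dixonlemma}.

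By the construction in Subsection \ref{Severihypersurface}, the intersection $\mathsf{N}_o\cdot\mathsf{V}_o$ at a point $o=(X,N)\in\widetilde{\mathbb{G}}$ is nonempty precisely when $\Delta_N=\Delta_5^X\cap N$ is an irreducible $10$-nodal plane sextic of $N$. Denote by $\mathbb{G}^{10,irr}\subset\mathbb{G}^{10}$ the closure of the locus of such nets: then $\mathsf{D}=\widetilde{\tau}^{-1}(\mathbb{G}^{10,irr})$. By Theorem \ref{P10bundle}, $\widetilde{\tau}$ is a Zariski-locally trivial $\mathbb{P}^{10}$-bundle over a dense open subset of $\mathbb{G}$ meeting $\mathbb{G}^{10,irr}$; hence, as soon as $\mathbb{G}^{10,irr}$ is irreducible, so are $\mathsf{D}$, $\mathsf{D}_{sn}=\widetilde{\gamma}(\mathsf{D})$ (the image of an irreducible variety under a morphism), and its GIT quotient $\mathcal{D}_{sn}$.

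It thus suffices to prove irreducibility of $\mathbb{G}^{10,irr}$. Through the diagram $(\diamond)$ and Dixon's lemma, the map $\nu$ is birational and identifies $\mathbb{G}^{10,irr}/\!\!/PGL(6)$ birationally with the moduli space of irreducible $10$-nodal stable plane sextics equipped with an even spin structure, which is a locally closed stratum of $\overline{\mathcal{PS}}_6^+$. Since $PGL(6)$ is connected, it is enough to prove the irreducibility of this stratum. In turn, it projects via the forgetful map $\chi_+$ onto the moduli space of irreducible $10$-nodal stable plane sextics, which is irreducible by Harris's theorem \cite{Harris86} on the irreducibility of $V_{6,10}^{irr}$.

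The main obstacle is to verify that the cover $\chi_+$ is connected over this base, i.e., that the monodromy on even theta characteristics acts transitively on the fiber over a general irreducible $10$-nodal plane sextic. I would deduce this from the irreducibility of $\overline{\mathcal{S}}_{10}^+$ (Cornalba, after Mumford and Atiyah) combined with a deformation argument: any two even theta characteristics on a fixed $10$-nodal rational plane sextic $C$ can be joined by a path in $\overline{\mathcal{S}}_{10}^+$ that, by the large monodromy of the Severi variety $V_{6,10}^{irr}$ and the density of the plane locus in the relevant boundary stratum of $\overline{\mathcal{S}}_{10}^+$, can be arranged to remain within the plane-sextic stratum. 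Controlling the specialization of theta characteristics as one moves inside the Severi variety is the delicate step.
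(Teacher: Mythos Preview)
Your strategy coincides with the paper's: write $\mathcal{D}_{sn}=\gamma\bigl(\tau^{-1}(\nu^{-1}(\chi_+^{-1}(\mathcal{V})))\bigr)$ through the diagram \ref{diagram}, with $\mathcal{V}=V_{6,10}^{irr}(P)/\!\!/PGL(3)$, and propagate irreducibility step by step using Harris for $\mathcal{V}$, the birationality of $\nu$ (Dixon's lemma), and the $\mathbb{P}^{10}$-bundle structure of $\tau$. The only point of divergence is the step showing $\chi_+^{-1}(\mathcal{V})$ is irreducible. The paper dispatches it in one line, citing the irreducibility of $\overline{\mathcal{PS}}_6^+$ (after Cornalba) together with the finiteness of $\chi_+$; you instead flag it as the delicate step and sketch a monodromy/deformation argument inside $\overline{\mathcal{S}}_{10}^+$. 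Your caution is well placed --- irreducibility of the total space of a finite cover does not by itself force irreducibility of the preimage of an arbitrary irreducible subvariety --- so what you are proposing is a justification of a step the paper takes as granted, not a different route. Your sketched argument for that step is still vague (arranging a path in $\overline{\mathcal{S}}_{10}^+$ to stay in the plane-sextic locus is exactly the nontrivial content), but modulo that, your plan matches the paper's proof.
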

    \begin{proof}
        Let $P$ be a plane and let us consider the Severi variety $V_{6,10}^{irr}(P)$ and its modular counterpart
        \[
            \mathcal{V}:=V_{6,10}^{irr}(P)/\!\!/PGL(3)\subset\overline{\mathcal{P}}_6\subset\overline{\mathcal{M}}_{10}.
        \]
        Then, via the equivariant maps of the diagram \ref{diagram}, we see that
        \[
            \mathcal{D}_{sn}=\mathsf{D}_{sn}/\!\!/PGL(6)=\gamma(\tau^{-1}(\nu^{-1}(\chi_+^{-1}(\mathcal{V})))).
        \]
        The quotient $\mathcal{V}$ is irreducible since $V_{6,10}^{irr}(P)$ is. The moduli space $\overline{\mathcal{PS}}_6^+$ is irreducible, as a consequence of what was shown by Cornalba in \cite{Cornalba89}, and the map $\chi_+$ is a finite morphism; hence, the preimage $\chi_+^{-1}(\mathcal{V})$ is irreducible. The map $\nu$ is birational, due to the proposition \ref{dixonlemma}, implying that $\nu^{-1}(\chi_+^{-1}(\mathcal{V}))$ is irreducible too. Finally, $\tau$ is a projective bundle with $\mathbb{P}^{10}$ as general fiber so it preserves the irreducibility and this concludes the argument that $\gamma(\tau^{-1}(\nu^{-1}(\chi_+^{-1}(\mathcal{V}))))=\mathcal{D}_{sn}$ is irreducible.
    \end{proof}

    \begin{rmk}\label{fourdivisors}
        The previous constructions were explicitly performed for the irreducible component $V_{6,10}^{irr}(P)$ of the Severi variety, taking into account just the irreducible plane sextic curves with 10 nodes. In particular, a general point in the divisor $\mathcal{D}_{sn}$ parametrizes a cubic fourfold that admits a polar net having an irreducible sextic curve with 10 nodes as discriminant locus. However, what has been done in these pages is not specific to the fact that the discriminant curve $\Delta_N$ is irreducible. Therefore, everything can be repeated also for the curves in the other irreducible components of the variety $V_{6,10}(P)$, giving rise to four other divisors in $\mathcal{C}$. Indeed, the Severi variety $V_{6,10}(P)$ is the union of five irreducible components, consistent with the ways a plane sextic curve with exactly 10 nodes can split up:
        \begin{itemize}
            \item an irreducible 10-nodal sextic;
            \item a smooth cubic and a 1-nodal cubic;
            \item a conic and a 2-nodal quartic;
            \item a line and a 5-nodal quintic;
            \item two lines and a 1-nodal quartic.
        \end{itemize}
        In her PhD thesis \cite{Sammarco25} the author explicitly studies all the five irreducible components of the Severi divisor of $\mathcal{C}$.
    \end{rmk}

\section{The Severi divisor is nonspecial}\label{theSeveridivisorisnonspecial}

    \begin{notation}
        For the sake of compactness we introduce the following notations:
        \begin{itemize}
            \item $\mathsf F := x^3_0 + \ldots + x^3_5$ for the equation of the Fermat cubic fourfold.
            \item $\mathsf f$ for the point defined by $\mathsf F$ in the moduli space $\mathcal C$.
            \item $X_{\mathsf f}:=V(\mathsf F)$ for the Fermat cubic fourfold.
        \end{itemize}
    \end{notation}
    
    To open this section we show that the Fermat cubic fourfold defines a point of the Severi divisor $\mathcal{D}_{sn}$. At first, it is clear that a quadric $Q$ of the polar linear system $\vert J_{X_{\mathsf f}} \vert$ has equation
    \[
        \lambda_0x_0^2+\lambda_1x_1^2+\lambda_2x_2^2+\lambda_3x_3^2+\lambda_4x_4^2+\lambda_5x_5^2=0,
    \]
    where $(\lambda_0: \ldots: \lambda_5)$ are coordinates on $\vert J_{X_{\mathsf f}} \vert := \mathbb P^5$. Then  the discriminant  hypersurface
    \[
        \Delta_5^{X_{\mathsf f}}:=\Delta_5\cap|J_{X_{\mathsf f}}|=  V(\lambda_0\lambda_1\lambda_2\lambda_3\lambda_4\lambda_5) \subset  \mathbb{P}^5
    \]
    is the union of six hyperplanes. Let $N \subset \mathbb P^5$  be a general net of polar quadrics of $X_{\mathsf f}$: then we can assume that $N$ is a plane transversal to $\Delta_5^{X_{\mathsf f}}$, so that its discriminant curve $\Delta_N := \Delta_5^{X_{\mathsf f}} \cap N$ is a nodal union of six lines. We can now show that $\Delta_N$ is in the Severi variety $V_{6,10}^{irr}(N)$.
    
    \begin{thm}\label{limitofirreduciblesextic}
        $\Delta_N$ is limit of a flat family of integral, 10-nodal plane sextics of $N$, that is, $\Delta_N \in V_{6,10}^{irr}(N)$. Moreover $\Delta_N$ is a general union of six lines of $N$.
    \end{thm}
    \begin{proof}
        Let $S \subset \mathbb P^6$ be a smooth sextic Del Pezzo surface, then $S$ is unique up to automorphisms of $\mathbb P^6$ and  it is the image of the rational map $\kappa: \mathbb P^2 \longrightarrow \mathbb P^6$, defined by the linear system $\vert \mathcal I_{\mathsf e}(3) \vert$. Here, $\mathsf e = \lbrace e_1, e_2, e_3 \rbrace \subset \mathbb P^2$ is a set of three non collinear points and $\mathcal I_{\mathsf e}$ is its ideal sheaf. Actually, $S$ is obtained via the blow up $\sigma: S \longrightarrow \mathbb P^2$ of center $\mathsf e$. Moreover, $S$ contains exactly six lines whose formal sum is the hyperplane section
        \[
            E := E_1 + E_2 + E_3 + E_{12} + E_{13} + E_{23},
        \]
        where $E_{ij}$ is the strict transform, via $\sigma$, of the line $\overline { e_i e_j}$ and $E_k = \sigma^{-1}(e_k)$, where $i,j,k \in \lbrace 1, 2, 3 \rbrace$ and $i < j$. We have that $p_a(E) = 1$ and $\mathrm{Sing}(E)$ consists of six nodes:
        \[
            E_1 \cap E_{12}, \ E_1\cap E_{13}, \ E_2 \cap E_{23}, \ E_2 \cap E_{12}, \ E_3 \cap E_{13}, \ E_3 \cap E_{23}.
        \]
        Then $E$ defines a point of the dual variety  $S^{\vee} \subset \mathbb P^{6 \vee}$, that is, the irreducible hypersurface parametrizing the singular hyperplane sections of $S$. A general point in $S^{\vee}$ corresponds to  a 1-nodal irreducible sextic curve. Hence, $E$ moves in an irreducible flat family  
        \[
            \lbrace H_t : t \in T \rbrace \subset S^{\vee},
        \]
        of hyperplane sections of $S$ such that $H_o = E$ for some $o \in T$ and, for $t \neq o$,  the curve $H_t$ is $1$-nodal and integral. Then, $H_t$ is  rational for $t \neq 0$. Now let $P \subset \mathbb P^6$ be a $3$-dimensional linear subspace and let $\pi_P: S \longrightarrow \mathbb P^2$ be the projection from $P$. Then, for a general choice of $P$, we obtain in $\mathbb P^2$ a family of plane sextic curves $ \lbrace \pi_P(H_t) : t \in T \rbrace $ such that a general $\pi_P(H_t)$ is an integral $10$-nodal curve and, for $t = o$, $\pi_P(E)$ is a nodal union of $6$ lines. Hence, $\pi_P(E)$ is limit of $10$-nodal irreducible plane sextic curves, that is, $\pi_P(E)$ is an element of the variety $V_{6,10}^{irr}(\mathbb{P}^2)$ of integral and $10$-nodal sextics. \par
        We also claim that a general $\pi_P(E)$ is projectively equivalent to a general $\Delta_N$. Indeed, the family of nodal unions of six lines in $\mathbb P^2$ is irreducible and its GIT quotient $\mathsf Q$ is $4$-dimensional. Now, we have $E = \mathbb P^5 \cap S$ and the family of  projections of $E$ in $\mathbb P^2$ dominates $\mathsf Q$. Indeed, this family is parametrized by the $9$-dimensional Grassmannian $Gr(3,H^0(\mathcal O_E(1)))$. Moreover, its image in $\mathsf Q$ is $Gr(3,H^0(\mathcal O_E(1)))/\!\!/\Gamma$, where $\Gamma$ is the stabilizer of $E$ in $\mathrm {Aut}\langle E \rangle$, where $\langle E \rangle \subset \mathbb P^6$ is the hyperplane spanned by $E$. It is easy to see that $\Gamma$ is $5$-dimensional. Hence $\dim Gr(3,H^0(\mathcal O_E(1)))/\!\!/\Gamma = 4$.
    \end{proof}
    
    Let $(\Delta_N, \theta)$ be an even spin curve such that $\Delta_N$ is a general plane section of the discriminant hypersurface of $\vert J_{X_{\mathsf f} }\vert$. The next result follows as a corollary.
    
    \begin{cor}
        $(\Delta_N, \theta)$ moves in a flat, integral family $\lbrace (\Delta_t, \theta_t), t \in T \rbrace$ of even spin curves where, for a general $t$, the stable model of $\Delta_t$ is an integral $10$-nodal plane sextic. 
    \end{cor}
    \begin{proof}
        In $\overline {\mathcal M}_{10}$ let us take the irreducible curve defined by the family of plane sextics with $10$ nodes, $\lbrace [\pi_P(H_t)] : t \in T\rbrace$, constructed in the proof of Theorem \ref{limitofirreduciblesextic}. Then, let us lift this curve via the forgetful map $\chi_+:\overline{\mathcal{S}}_{10}^+\longrightarrow \overline {\mathcal M}_{10}$. Since $\chi_+$ is finite, every point $[\Delta_N,  \theta]$ belongs to this lift. The curves $\Delta_t$ are the preimages of the $\pi_P(H_t)$'s via $\chi_+$.
    \end{proof}

    We will build a family of tangent lines to the Severi variety $V_{6,10}^{irr}(N) \subset \vert \mathcal O_N(6) \vert$ at its element $\Delta_N$. Let $\Delta_N$ be a general union of six lines, then we can write
    \[
        \Delta_N =  \mathscr T \cup \overline {\mathscr T},
    \]
    where $\mathscr T$, $\overline {\mathscr T}$ denote two triangles of lines in $\Delta_N$ whose union is the curve $\Delta_N$. Now, we consider the ideal sheaves $\mathcal I_{\mathscr T}$ and $\mathcal I_{\overline {\mathscr T}}$ of $\mathscr T$ and $\overline {\mathscr T}$ and we finally define the subspaces
    \[
        \mathbb P^9_{\mathscr T} := \vert \mathcal I_{\mathscr T}(6) \vert \ \text {and} \  \mathbb P^9_{\overline {\mathscr T}} := \vert \mathcal I_{\overline {\mathscr T}}(6) \vert
    \]
    of $\vert \mathcal O_N(6) \vert := \mathbb P^{27}$. These are the $9$-dimensional linear systems of sextics with fixed component respectively $\mathscr T$ and $\overline {\mathscr T}$. Their union spans an 18-dimensional linear space which we denote by $\mathbb P^{18}_{\mathscr T, \overline {\mathscr T}}$. We have
    \[ 
        \mathbb P^9_{\mathscr T} \cup  \mathbb P^9_{\overline {\mathscr T}} \ \subset \  \mathbb P^{18}_{\mathscr T, \overline {\mathscr T}} \subset \mathbb P^{27}.
    \]

    \begin{thm}\label{triangle+cubic}
        The subspace $\mathbb P^{18}_{\mathscr T, \overline {\mathscr T}}$ is tangent  to $V_{6,10}^{irr}(\mathbb{P}^2)$ at $\Delta_N$.   
    \end{thm}
    \begin{proof}
        Let $o \in Y \subset \mathbb P^N$, where $Y$ is a projective variety. We recall that any union of projective linear subspaces contained in $Y$, and passing through $o$, spans a subspace contained in the tangent projective subspace to $Y$ at $o$. We apply this property to our situation. For $i = 1,2,3$ we consider in $\mathbb P^{27}$ the 6-dimensional linear systems of curves $\mathscr T \cup \mathscr F$ such that $\mathscr F \subset N$ is a cubic singular at the vertex $\overline v_i$ of the triangle $\overline {\mathscr T}$. We denote these by
        \[
            \mathbb{P}^6_{\mathscr T, i} \subset \mathbb P^{27}, \ \ i = 1,2,3.
        \]
        These linear systems are in $\mathbb{P}^9_{\mathscr T}$ and span it, as immediately follows from the equality  
        \[
            \mathbb P^6_{\mathscr T,1} \cap \mathbb P^6_{\mathscr T, 2} \cap \mathbb{P}^6_{\mathscr T,3} = \left\{\mathscr T \cup \overline{\mathscr T}\right\}.
        \]
        Now, as in the proof of Theorem \ref{limitofirreduciblesextic}, the projections in $\mathbb P^2$ of the hyperplane sections of a sextic Del Pezzo surface $S$ can be used to show that $\mathbb P^6_{\mathscr T, i} \subset V_{6,10}^{irr}(\mathbb P^2)$. Indeed, a general element of $\mathbb P^6_{\mathscr T, i}$ is $\mathscr T \cup \mathscr F$, where $\mathscr F$ is a 1-nodal cubic, and we can show that $\mathscr T \cup \mathscr F \in V_{6,10}^{irr}(\mathbb{P}^2)$: in the proof of Theorem \ref{limitofirreduciblesextic} it suffices to replace the hyperplane section $E$ with the one cut out on $S$ by a general $\mathbb{P}^5$ passing through $E_i+E_j+E_{ij}$. This implies that $\mathbb{P}^6_{\mathscr T,i} \subset V_{6,10}^{irr}(\mathbb{P}^2)$ and hence the statement.
    \end{proof}
    
    The next corollary is immediate. 

    \begin{cor}\label{corollary4.4}
        Let $L\subset \vert\mathcal O_N(6) \vert$ be a pencil containing $\Delta_N$ and having a triangle $\mathscr T$ as a fixed component. Then $L$ is a tangent line to $V_{6,10}^{irr}(N)$ at the element $\Delta_N$.
    \end{cor}

    \begin{rmk}\label{normalizationoftheSeverivariety}
        The Severi variety $V_{6,10}^{irr}(\mathbb P^2)$ is not smooth at $\Delta_N$, as can also be deduced from Theorem \ref{triangle+cubic}. Indeed, its dimension is 17 while its projective tangent space at $\Delta_N$ contains a linear space of dimension 18. The curve $\Delta_N$ has 15 nodes, since it is the union of six lines, and choosing 10 of these results in a smooth branch of the normalization of $V_{6,10}^{irr}(\mathbb P^2)$ at the point corresponding to $\Delta_N$.
    \end{rmk}

    Now, the interest of the Severi divisor lies in the following theorem.

    \begin{thm}\label{maintheorem}
        The divisor $\mathcal{D}_{sn}\subset\mathcal{C}$ is not a Noether-Lefschetz divisor.
    \end{thm}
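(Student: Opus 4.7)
The plan is to argue by contradiction via Voisin's infinitesimal characterization of Hodge loci \cite{Voisin13}, following the strategy of \cite{RanestadVoisin17}. Suppose that $\mathcal{D}_{sn} \subseteq \mathcal{C}_d$ for some $d$. Then at a very general $[X] \in \mathcal{D}_{sn}$ there is a primitive integral class $\alpha \in H^{2,2}(X,\mathbb{Z})_{\mathrm{prim}}$, not proportional to $h^2$, whose local Hodge locus $\mathcal{C}_\alpha$ contains the germ of $\mathcal{D}_{sn}$ at $[X]$. Via Griffiths' residue description through the Jacobian ring $R_X = \mathbb{C}[x_0,\dots,x_5]/J_X$, we have $H^1(X,T_X) \cong (R_X)_3$, $H^{2,2}(X)_{\mathrm{prim}} \cong (R_X)_3$ and $H^{1,3}(X) \cong (R_X)_6 \cong \mathbb{C}$, and the IVHS cup product becomes the perfect Macaulay pairing $(R_X)_3 \otimes (R_X)_3 \to (R_X)_6$. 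Voisin's theorem then identifies $T_{[X]}\mathcal{C}_\alpha = \alpha^\perp$. Since $\mathcal{D}_{sn}$ is a divisor, the inclusion $T_{[X]}\mathcal{D}_{sn} \subseteq \alpha^\perp$ forces the Macaulay-orthogonal line $\mathbb{C}\alpha_0 := (T_{[X]}\mathcal{D}_{sn})^\perp \subset (R_X)_3$ to contain the integral Hodge class $\alpha$; in particular $\alpha_0$ must be proportional to a rational class in $H^{2,2}(X)_{\mathrm{prim}}$.

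The first step is to specialize at the Fermat cubic $X_F$, which belongs to $\mathcal{D}_{sn}$ by Theorem~\ref{limitofirreduciblesextic}, and to compute $\alpha_0 \in (R_{X_F})_3$ explicitly. Here $R_{X_F} = \mathbb{C}[x_0,\dots,x_5]/(x_0^2,\dots,x_5^2)$, so $(R_{X_F})_3$ has dimension $20$ with monomial basis $\{x_ix_jx_k : i<j<k\}$ and Macaulay pairing equal to the combinatorial complementation $x_ix_jx_k \cdot x_lx_mx_n = x_0x_1x_2x_3x_4x_5$ when $\{i,j,k,l,m,n\} = \{0,\ldots,5\}$ and zero otherwise. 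To extract $T_{[X_F]}\mathcal{D}_{sn}$ I would use the construction of Section~\ref{Severihypersurface}: the $\mathbb{P}^{10}$-bundle structure of $\widetilde\tau$ from Theorem~\ref{P10bundle} realizes the germ of $\mathcal{D}_{sn}$ at $[X_F]$ as the image under $\widetilde\gamma$ of a neighborhood of some $(X_F,N_0) \in \widetilde{\mathbb{G}}$ with $N_0 \in \mathbb{G}_{X_F} \cap \mathbb{G}^{10}$, while the injectivity of the Lichtenbaum--Schlessinger map in Theorem~\ref{isolated} ensures smoothness at such a point. The degeneration from Theorem~\ref{limitofirreduciblesextic} (a general $\Delta_{N_0}$ is a union of six lines, limit of $10$-nodal sextics) then fixes the node configuration, and unfolding the normal sheaf sequence yields explicit generators of $T_{[X_F]}\mathcal{D}_{sn}$ as a codimension-one subspace of $(R_{X_F})_3$.

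The conclusion is to verify that $\alpha_0$ is \emph{not} rational in $H^{2,2}(X_F,\mathbb{C})_{\mathrm{prim}}$, contradicting the assumption. The main obstacle is precisely this rationality check: $X_F$ is a CM variety and its primitive $(2,2)$-lattice has maximal rank $20$ (controlled by Shioda's Fermat motives), so although ``generic'' elements of $(R_{X_F})_3$ are irrational, the rational classes are dense along arithmetically prescribed directions, and one must verify that the explicit $\alpha_0$ arising from the Severi construction does not accidentally lie along any of these. Following \cite{RanestadVoisin17}, the most robust route is to exploit equivariance: since $\mathcal{D}_{sn}$ is $\mathrm{Aut}(X_F)$-invariant, $T_{[X_F]}\mathcal{D}_{sn}$ and $\alpha_0$ are stabilized by the residual subgroup of $(\mathbb{Z}/3\mathbb{Z})^6/\Delta \rtimes S_6$ fixing the chosen net $N_0$; this reduces the rationality check to a finite list of candidates, which can be excluded one by one either by a direct IVHS computation against the explicit tangent vectors, or by propagating the putative class $\alpha$ to a nearby non-Fermat point of $\mathcal{D}_{sn}$ where the ambient Hodge lattice collapses to $\mathbb{Z} h^2$ and the contradiction becomes manifest.
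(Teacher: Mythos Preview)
Your overall strategy---the IVHS rationality check \`a la \cite{RanestadVoisin17}, showing that the Macaulay-orthogonal line $\mathbb{C}\alpha_0=(T_{[X]}\mathcal{D}_{sn})^{\perp}$ cannot contain a rational Hodge class---is a legitimate alternative to what the paper does. The paper instead uses the \emph{other} method from that source: it exhibits a local branch of $\mathcal{D}_{sn}$ that is \emph{singular} at $[X_F]$, which already contradicts the smoothness of local branches of Noether--Lefschetz divisors.

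The genuine gap is in your implementation at the Fermat point. You assert that Theorem~\ref{isolated} guarantees smoothness of $\mathcal{D}_{sn}$ at $[X_F]$, so that $T_{[X_F]}\mathcal{D}_{sn}$ is a hyperplane in $(R_{X_F})_3$ and $\alpha_0$ is well defined. This fails on two counts. First, Theorem~\ref{isolated} is stated for a net $N$ \emph{general in $\mathbb{G}^{10}$}; the polar nets of $X_F$ have discriminant a union of six lines (fifteen nodes), far from general, so the injectivity of the Lichtenbaum--Schlessinger map there is not available. Second, and decisively, the paper proves exactly the opposite of what you need: via Lemma~\ref{triangle+cubic} and an explicit computation of the discriminant along each pencil $F_F+t\,x_ix_jx_k$, it shows that \emph{all twenty} monomial directions of $(R_{X_F})_3$ are tangent to a single branch of $\mathcal{D}_{sn}$ at $[X_F]$. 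Thus the tangent space at the Fermat point is the full $20$-dimensional space, your $\alpha_0$ degenerates to zero, and no rationality test can be run there. The phenomenon that makes the paper's singularity argument succeed at $[X_F]$ is precisely what makes your rationality argument collapse at that same point; to rescue your approach you would have to work at a smooth point of $\mathcal{D}_{sn}$ and compute $\alpha_0$ there, which is a different and substantially harder calculation than the one you outline.
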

    
    The structure of the proof relates to that of \cite[Proposition 4.16]{RanestadVoisin17}. We rely on the next statement, referring the reader to \cite[Lemma 4.17]{RanestadVoisin17} and \cite[Corollary 3.3]{Voisin13} for more information.
    
    \begin{prop}
        In a local universal family of deformations of cubic fourfolds, Noether-Lefschetz divisors have a smooth normalization.
    \end{prop}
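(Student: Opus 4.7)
The plan is to work locally around a point $[X]\in\mathcal{C}$ in the Kuranishi deformation space $B$ of $X$, which is smooth of dimension $\dim H^1(T_X)=20$ since smooth cubic fourfolds are unobstructed. A Noether-Lefschetz divisor $\mathcal{C}_d$ meets $B$ in the union of the Hodge loci $B_\lambda\subset B$ attached to those primitive classes $\lambda\in H^{2,2}(X,\mathbb{Z})_{\mathrm{prim}}$ whose saturated sublattice $\langle h^2,\lambda\rangle$ has discriminant $d$. Because the lattice $H^{2,2}(X,\mathbb{Z})$ is finitely generated, only finitely many such $\lambda$ occur, so this union is finite.

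The key step is to recall Voisin's description of the local equations of $B_\lambda$ (see \cite{Voisin13}). In a neighborhood of $[X]$ in $B$, the locus $B_\lambda$ is the zero locus of a single holomorphic function $f_\lambda$, whose differential at the origin equals the linear form
\[
    \ell_\lambda\colon H^1(T_X)\longrightarrow H^{3,1}(X),\qquad \eta\longmapsto \mathrm{pr}_{H^{3,1}}(\eta\cup\lambda).
\]
Here $\eta\cup\lambda\in H^{3,1}(X)\oplus H^{2,2}(X)$ by Griffiths transversality, and the point is that this linear form is genuinely the linearization of a holomorphic defining equation for $B_\lambda$, not merely a formal description of its tangent cone. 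Consequently $B_\lambda$ is smooth at $[X]$ as soon as $\ell_\lambda\neq 0$.

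The nonvanishing of $\ell_\lambda$ for every nonzero primitive $\lambda$ is then supplied by the infinitesimal Torelli theorem for cubic fourfolds. Fixing a generator $\omega\in H^{3,1}(X)\cong\mathbb{C}$, the form $\ell_\lambda$ coincides, up to a nonzero factor, with $\eta\mapsto\langle\mu(\eta\otimes\omega),\lambda\rangle$, where $\mu\colon H^1(T_X)\otimes H^{3,1}(X)\to H^{2,2}(X)$ is the IVHS multiplication map. Since $\mu$ is surjective onto $H^{2,2}(X)_{\mathrm{prim}}$ (this is the infinitesimal Torelli property for cubic hypersurfaces) and the cup product pairing on $H^{2,2}(X)_{\mathrm{prim}}$ is non-degenerate, this functional is nonzero whenever $\lambda\neq 0$. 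Therefore each $B_\lambda$ is a smooth hypersurface of $B$, and the normalization of $\mathcal{C}_d\cap B$ is the disjoint union $\bigsqcup_\lambda B_\lambda$, which is smooth.

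The main obstacle I expect is the step that upgrades the first-order description to a genuine local defining equation: the assertion that $B_\lambda$ is cut out, not merely to first order but scheme-theoretically in a neighborhood of $[X]$, by a holomorphic function with differential $\ell_\lambda$. This is the content of the local study of Hodge loci in \cite{Voisin13}, and its invocation here parallels the use made of it in the proof of \cite[Proposition 4.16]{RanestadVoisin17}, which is the template for the forthcoming argument.
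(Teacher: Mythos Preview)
The paper does not actually supply a proof of this proposition: immediately after stating it, the author writes ``We refer the reader to \cite[p.~32]{RanestadVoisin17} and \cite{Voisin13} for further information'' and moves on to use it as a black box. So there is no argument in the paper to compare against; your proposal is in effect reconstructing the content of those references, and it does so correctly. The essential points---that $h^{3,1}(X)=1$ makes each $B_\lambda$ a hypersurface defined by a single holomorphic function, that its differential is the IVHS pairing with $\lambda$, and that infinitesimal Torelli for cubic fourfolds forces this differential to be nonzero for every nonzero primitive $\lambda$---are exactly the ingredients one finds in \cite{Voisin13} and in the passage of \cite{RanestadVoisin17} cited by the paper.

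One small correction: your justification for the finiteness of the relevant $\lambda$'s is not quite right. Finite generation of $H^{2,2}(X,\mathbb{Z})$ alone does not bound the number of lattice vectors giving a fixed discriminant. What does the job is that, by the Hodge--Riemann bilinear relations, the intersection form restricted to $H^{2,2}(X,\mathbb{Z})_{\mathrm{prim}}$ is positive definite; hence there are only finitely many integral classes of bounded self-intersection, and the discriminant condition bounds the self-intersection. This is a cosmetic fix and does not affect the rest of your argument.
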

    
    Therefore, to show that $\mathcal{D}_{sn}$ is not a Noether-Lefschetz divisor, our first need is to build, at a suitable point $\mathsf o \in \mathcal D_{sn} \subset \mathcal C$, the local universal family of deformations, or Kuranishi family, for $\mathsf F_o$, where $\mathsf F_o \in {\rm Sym}^3 V_6$ defines the point $\mathsf o$ in  $\mathcal C$. Then, studying such a family, we have to deduce that the normalization of $\mathcal D_{sn}$ at $\mathsf o$ is not smooth.
    
    We perform this program for the point $\mathsf f$ defined by the Fermat equation $ \mathsf F$. As in the case of \cite[Proposition 4.18]{RanestadVoisin17}, we construct a suitable set of $20$ local analytic deformations of $\mathsf F$ in the divisor $\mathcal D_{sn}$, in an open analytic disk around $\mathsf f \in \mathcal C$. The elements of this set are 1-dimensional and their tangent vectors at $\mathsf f$ naturally define a basis for the $20$-dimensional tangent space to $\mathcal C$ at $\mathsf f$. Then this basis  will be used to address the non smoothness of the normalization of $\mathcal D_{sn}$ at $\mathsf f$.
    
    Preliminarily, let us recall the construction, in the affine space $\mathrm{Sym}^3V_6$, of the tangent space to the orbit $Gl(V_6)\cdot \mathsf F$ of $\mathsf F$ (cf. \cite[Remark 6.16]{Voisin03}). As it is well known, this is the space
    \[
        \mathsf F + J_{X_{\mathsf f}}[3],
    \]
    where $J_{X_{\mathsf f}}[3]$ denotes the space of degree $3$ forms in the Jacobian ideal 
    \[
        J_{X_{\mathsf f}}= \ <\frac{\partial \mathsf F}{\partial x_0},\ldots,\frac{\partial \mathsf F}{\partial x_5}> \ = \ <x_0^2,\ldots,x_5^2>.
    \]
    Hence, the next natural step is to consider in ${\rm Sym}^3 V_6$ the affine space $\mathsf F + T$, where
    \[
        T := \langle x_ix_jx_k, \ 0 \leq i < j < k \leq 5 \rangle
    \]
    is the $20$-dimensional vector space generated by the monomials $x_ix_jx_k$. Clearly the affine subspaces $\mathsf F + J_{X_{\mathsf f}}[3]$ and $\mathsf F + T$ are complementary in ${\rm Sym}^3 V_6$, which implies that $\mathsf F + T$ is transverse to the $Gl(V_6)$-orbit of  $\mathsf F$ at $\mathsf F$. Then, as in \cite{RanestadVoisin17}, let us consider the morphism
    \begin{align*}
        \phi: Gl(V_6)\times T&\longrightarrow\mathrm{Sym}^3V_6\\
        (\gamma,t)&\longmapsto\gamma(\mathsf F)+t.
    \end{align*}
    Its general fiber is bijective to the stabilizer of $\mathsf F$ in $Gl(V_6)$. As it is well known, $\phi$ is dominant and its differential at $(id,0)$ is an isomorphism. Moreover, there exists an open analytic neighborhood $A' \subset\mathrm{Sym}^3V_6$ of $\mathsf F$ and a holomorphic retraction of $\phi$, say $\pi:A' \longrightarrow A \subset T$, such that $\lbrace \pi(v) \rbrace = A \cap O_v$, where $O_v$ is the $Gl(V_6)$-orbit of $v$ (cf. \cite{RanestadVoisin17} and \cite[Chapter 6]{Voisin03}). This construction provides the local universal deformation, or Kuranishi family, over the open analytic neighborhood $A$ of $\mathsf F$ in $T$ (see \cite{Sernesi06} and \cite{Kuranishi62}). This family is obtained by restricting the universal hypersurface in $\mathrm{Sym}^3V_6\times\mathbb{P}^5$ to $A \times\mathbb{P}^5$. Relying on this setting, it is standard to identify $T$ with the space of infinitesimal deformations of $\mathsf F$ and to construct a useful basis for it. Indeed it suffices to consider in $\mathrm{Sym}^3V_6$ the set of affine lines:    
    \[
        S=\{F_{ijk}(t):=tx_ix_jx_k+x_0^3+x_1^3+x_2^3+x_3^3+x_4^3+x_5^3 \ | \ t\in\mathbb{C}, \ i < j <  k\}.
    \]
    Each line $F_{ijk}(t)$ is a deformation of $\mathsf F$ and the vector $\left(\frac{\partial F_{ijk}(t)}{\partial t}\right)_0 = x_ix_jx_k$ generates its tangent space at $\mathsf F$. Taking the quotient of $\mathrm {Sym}^3 V_6$ by $J_{X_{\mathsf f}}[3]$, we can fix the identity
    \[
        \mathrm {Sym}^3 V_6 / J_{X_{\mathsf f}}[3] = T.
    \]
    Clearly, the set of vectors $\left\{\left(\frac{\partial F_{ijk}(t)}{\partial t}\right)_0, \ i < j < k \right\}$ is a basis of $T$ and the local universal family over $A$ defines an injective homomorphism  $\mathbf{T}_{\mathsf F} A \longrightarrow \mathbf{T}_{\mathsf f}\, \mathcal C$ of tangent spaces. It is important to recall that this injective homomorphism is actually an isomorphism, because the moduli space $\mathcal C$ is smooth and $20$-dimensional.
    
    \begin{rmk}\label{automorphismsoftheFermat}
        An important remark is that the stabilizer of $\mathsf F$, $Stab(\mathsf F)$, acts on $T$ leaving it unchanged. Indeed, $Stab(\mathsf F)$, which in the case of the Fermat cubic fourfold coincides with $\rm Aut(X_{\mathsf f})$, is $(\mu_3)^5 \rtimes \mathfrak S_6$, where $\mu_3$ is the group of cubic roots of unity, and the semidirect product structure is given by the permutation of the coordinates by the symmetric group $\mathfrak S_6$. An element of the diagonal subgroup of $Stab(\mathsf F)$ has the form
        \[
            \gamma = \mathrm{diag}(\zeta_0, \dots, \zeta_5), \qquad \zeta_i^3 = 1,\quad \prod_{i=0}^5 \zeta_i = 1,
        \]
        acting on monomials by
        \[
            \gamma \cdot (x_i x_j x_k) = (\zeta_i \zeta_j \zeta_k)\, x_i x_j x_k.
        \]
        Since this is a scalar multiple of the same monomial, it lies again in $T$. Thus the diagonal subgroup preserves $T$. An element $\sigma \in \mathfrak S_6$ acts by permuting coordinates
        \[
            \sigma \cdot (x_i x_j x_k)= x_{\sigma(i)}\, x_{\sigma(j)}\, x_{\sigma(k)},
        \]
        where the indices $\sigma(i)$, $\sigma(j)$, $\sigma(k)$ are still distinct, so the resulting monomial is again a generator of $T$. Hence, the entire group $Stab(\mathsf F)$ preserves $T$, which is a $Stab(\mathsf F)$-stable subspace of $\mathrm{Sym}^3V_6$. Then, by composing $\pi$ by $g \in (\mu_3)^5 \rtimes \mathfrak S_6$, we obtain another holomorphic retraction $g \circ \pi: A' \longrightarrow g(A) \subset T$.
    \end{rmk}

    We can now prove the theorem.

    \begin{proof}[Proof of Theorem \ref{maintheorem}]
        Now we concentrate on the Severi divisor $\mathcal D_{sn}$, aiming to show that its normalization at $\mathsf f$ is not smooth. Therefore, via the local universal family at $\mathsf f$, we study $\mathcal D_{sn}$ in an open analytic disk around $\mathsf f$. Let $\mathcal D_{A'} \subset A'$ be the restriction to $A'$ of the pull-back of $\mathcal D_{sn}$ by the quotient map ${\rm Sym^3} V_6 \longrightarrow \mathcal C$. Then we consider its retraction in $A$, that is, $\mathcal D_A := \pi(\mathcal D_{A'})$. In other words, locally, the divisor comes from a divisor $\mathcal{D}_A\subset A$ such that its image $\mathcal{D}_{sn}$ in $\mathrm{Sym}^3V_6/\!\!/Gl(V_6)$ is obtained by taking the quotient of $\mathcal{D}_A$ by $Stab(\mathsf F)  \cong (\mu_3)^5 \rtimes \mathfrak S_6$, which is nontrivial. For this reason, we cannot identify here $\mathcal{D}_A$ with an open set of $\mathcal{D}_{sn}$.
    
        If $\mathcal{D}_{sn}$ were a Noether-Lefschetz divisor, then the divisor $\mathcal{D}_A$ in the local universal family of deformations should have smooth local branches, otherwise the normalization of the Severi divisor would still have singular local branches. Indeed, to obtain the normalization of the Severi divisor and to study it locally at the points coming from $\mathsf f$, first we need to eliminate the singularities coming from the automorphisms of $\mathsf F$ and then we can separate the branches. Hence, the divisor $\mathcal{D}_A$ represents the intermediate step towards to the normalization of $\mathcal{D}_{sn}$, in which we have solved the singularities caused by the nontriviality of the stabilizer of $\mathsf F$.
        
        We consider the 20 affine lines, which are the elements of the  set
        \[
            S = \left\{ F_{ijk}(t)\ \vert \ i <j < k \right\} \subset \mathsf F + T \subset {\rm Sym}^3 V_6.
        \]
        We observe that, for any $t$, the set of quadratic forms
        \[
            \left\{\frac{\partial F_{ijk}(t)}{\partial x_l}=\begin{cases}
                3x_l^2 \ &\mathrm{if} \ l\not\in\{i,j,k\}\\
                \frac {tx_ix_jx_k}{x_l} +3x_l^2 \ &\mathrm{if} \ l\in\{i,j,k\}
            \end{cases}\ \middle| \ l=0,\ldots,5 \right\}
        \]
        is a basis for the vector space defining the polar linear system $\vert J_{X_{ijk}}(t) \vert$ of the fourfold $X_{ijk}(t):=V(F_{ijk}(t))$. Since the symmetric group $\mathfrak S_6$ acts transitively on the set $S$ of pencils, it will not be restrictive to fix $i,j,k$. Then, we assume $i=1, \ j=3, \ k=5$ and consider $F_{135}(t) = tx_1x_3x_5+\mathsf F$. In this case the above basis of quadratic forms is
        \[
            \left\{ 3x_0^2, \ tx_3x_5+3x_1^2, \ 3x_2^2, \ tx_1x_5+3x_3^2, \ 3x_4^2, \ tx_1x_3+3x_5^2 \right\}.
        \]
        Then we fix coordinates $(y_0:\ldots:y_5)$ on $\vert J_{X_{135}(t)} \vert$, for any $t$, so that the equation of a general polar quadric is
        \[
            3y_0x_0^2+ty_1x_3x_5+3y_1x_1^2+3y_2x_2^2+ty_3x_1x_5+3y_3x_3^2+3y_4x_4^2+ty_5x_1x_3+3y_5x_5^2=0.
        \]
        The associated matrix is then
        \[M(t)=\begin{pmatrix}
            3y_0 & 0 & 0 & 0 & 0 & 0\\
            0 & 3y_1 & 0 & \frac{t}{2}y_5 & 0 & \frac{t}{2}y_3\\
            0 & 0 & 3y_2 & 0 & 0 & 0\\
            0 & \frac{t}{2}y_5 & 0 & 3y_3 & 0 & \frac{t}{2}y_1\\
            0 & 0 & 0 & 0 & 3y_4 & 0\\
            0 & \frac{t}{2}y_3 & 0 & \frac{t}{2}y_1 & 0 & 3y_5
        \end{pmatrix}\]
        and the discriminant locus $\Delta_5^{X_{135}(t)} \subset \vert J_{X_{135}(t)} \vert$ is defined by the equation
        \[
            \mathrm{det}M(t)=27y_0y_2y_4\left[\left(27+\frac{t^3}{4}\right)y_1y_3y_5-\frac{3}{4}t^2\left(y_1^3+y_3^3+y_5^3\right)\right]=0.
        \]
        Using the polynomial $\mathrm{det}M(t)$ and the line $F_{135}(t) := \mathbb A^1$ let us define the  morphism
        \begin{align*}
            \widehat \alpha: \mathbb A^1 &\longrightarrow \mathrm{Sym}^6 V_6\\
            t&\longmapsto\det M(t).
        \end{align*}
        Let $\mathbb V \subset \mathrm{Sym}^6 V_6$ be the $2$-dimensional subspace whose elements are the vectors
        \[
            27y_0y_2y_4 \left[v_0(y_1y_3y_5) - v_1(y_1^3+y_3^3+y_5^3)\right],
        \]
        where $(v_0,v_1)$ are the affine coordinates on $\mathbb V$. Then we have $\widehat \alpha(\mathbb A^1) \subset \mathbb V$, where
        \[
            v_0 =  27+\frac{t^3}4 \ , \ v_1 = \frac{3}{4}t^2.
        \]
        Passing to the projective space $\mathbb P(\mathbb V)$, we fix coordinates $(v_0:v_1)$ on it. Clearly, $\mathbb P(\mathbb V)$ is embedded as a line in $\vert \mathcal O_{\mathbb P^5}(6) \vert$ and coincides with the previous pencil of sextic hypersurfaces, having $V(y_0y_2y_4)$ as its fixed component and $V(y_0y_1y_2y_3y_4y_5)$ among its members.  By composing  $\widehat \alpha$ with the quotient map $\mathbb V - \lbrace \overline 0 \rbrace \longrightarrow \mathbb P(\mathbb V)$, we can define the morphism
        \begin{align*}
            \alpha: \mathbb A^1 &\longrightarrow \mathbb P(\mathbb V) \subset\vert \mathcal O_{\mathbb P^5}(6) \vert\\
            t&\longmapsto\left(27+\frac{t^3}4 \ \colon \ \frac{3}{4}t^2\right).
        \end{align*}
        Putting $u := \frac {v_1}{v_0}$, the morphism $\alpha$ is defined on $\mathbb A^1 - V\left(27+ \frac{t^3}4\right)$ by the rational function
        \[\label{equation}\tag{$\star$}
            u = \frac{3t^2}{108+t^3}.
        \]
        Now, consider $\mathbb F:=\lbrace t_0 \mathsf F + t_1x_1x_3x_5\ \vert \ (t_0,t_1) \in \mathbb C^2 \rbrace \subset \mathrm {Sym}^3V_6$. Putting $t = \frac {t_1}{t_0}$, the projective line $\mathbb P(\mathbb F)$ is the completion of the affine line $F_{135}(t)$. Obviously $\alpha$ extends to $\alpha:\mathbb P(\mathbb F) \longrightarrow \mathbb P(\mathbb V)$, which is a finite triple covering.
        
        Now, we fix a plane $N$ in the space $\mathbb P^5$ with coordinates $(y_0:\ldots:y_5)$, and the relative restriction map $\rho_N: \vert \mathcal O_{\mathbb P^5}(6) \vert \longrightarrow \vert \mathcal O_N(6) \vert$, such that $N$ is general and transversal to the morphism $\alpha$. Then, we define $\alpha_N: \mathbb A^1 \longrightarrow  \vert \mathcal O_N(6) \vert$ to be the composition $\rho_N \circ \alpha$. It is clear that $\alpha_N (\mathbb A^1)$ is an affine pencil of plane sextics of $N$, having the triangle of lines
        \[
            \mathscr T_N := V(y_0y_2y_4) \cap N
        \]
        as a fixed component. Let $\Delta_N(t) := V( \mathrm{det} M(t)) \cap N$; then $\Delta_N(0)$ is a union of six lines. Since $N$ is general, both $\mathscr T_N$ and $\Delta_N(0)$ are general nodal curves in their respective families of union of lines. Hence, it follows from corollary \ref{corollary4.4} that the affine line $\alpha_N(\mathbb A^1)$ is tangent to the Severi variety $V_{6,10}^{irr}(N)$ at its point $\alpha_N(0) = \Delta_N(0)$.

        The previous considerations and constructions are valid for any choice of an ordered triple $(i,j,k)$, that is for every affine line $F_{ijk}(t)$ in the set $S$. Now, we show that any $F_{ijk}(t)$ is tangent to $\mathcal D_A$ at $\mathsf F$ and hence the following lemma.

        \begin{lem}
            The Severi divisor $\mathcal D_{sn}$ is singular at $\mathsf f \in \mathcal C$.
        \end{lem}
        \begin{proof}
            A point of the vector space $T$ is a linear combination $\sum m_{ijk}x_ix_jx_k$, hence it can be defined by the lexicographically ordered 20-tuple $\overline{m}:=(m_{ijk})_{0\leq i<j<k\leq 5}$. Then a point in the affine space $\mathsf F + T\subset\mathrm{Sym}^3V_6$ will be denoted by $\mathsf F+\overline{m}:=\mathsf F+\sum m_{ijk}x_ix_jx_k$. Recall that $A\subset T$ is the open analytic neighborhood of $\mathsf F$ previously constructed through a holomorphic retraction and consider the map $\delta_A:A\longrightarrow \vert \mathcal O_N(6) \vert$ sending $\overline{m} \in A$ to the unique discriminant sextic curve of the net $N$ of polar quadrics of $\mathsf F +\overline{m}$. Then the pull-back by $\delta^*_A$ of $V_{6,10}^{irr}(N)$ is contained in $\mathcal{D}_A$, since the latter is the retraction of the pull-back of $\mathcal{D}_{sn}$ by the quotient map $\mathrm{Sym}^3V_6\longrightarrow\mathcal{C}$. Now, with the same notations as before, observe that $\delta_A$ restricts to $\alpha_N$ along $A \cap F_{ijk}(t)$ and that the equality (\ref{equation}) implies that $\alpha_N$ is ramified at $t=0$, that is, at $\mathsf F$. This implies that $F_{ijk}(t)$ is a tangent line to $\mathcal{D}_A$.  Since the tangent vectors to the 20 lines $F_{ijk}(t)$ form a basis of $T$, it follows that $T\subset \mathbf{T}_{\mathsf F}\mathcal{D}_A$ and, for dimensional reasons, that $T$ is actually the Zariski tangent space to $\mathcal{D}_A$ at the point $\mathsf F$. Again by a dimension count it follows that $\mathcal{D}_A$ is singular at $\mathsf F$, that is, $\mathsf f$ is singular for $\mathcal D_{sn}$.
        \end{proof}

        Now there is one last step left to prove that $\mathcal{D}_{sn}$ is nonspecial. The previous lemma implies that the general line contained in $\mathsf F+T$ passing through the Fermat polynomial has multiplicity 2 in $\mathsf F$, by a semicontinuity argument. Then, the tangent cone to $\mathcal{D}_A$ at $\mathsf F$ is the union of the lines of $\mathsf F+T$ passing through $\mathsf F$ with multiplicity at least 3 and it is denoted by $\mathbf{C}_{\mathsf F}(\mathcal D_A)$.
        
        \begin{defn}
            Let $X$ be an algebraic variety, $x$ a point of $X$, and $(\mathcal{O}_{X,x}, \mathfrak{m})$ the local ring of $X$ at $x$. The \textit{tangent cone} $\mathbf{C}_xX$ to $X$ at $x$ is $\mathrm{Spec}(\rm gr_{\mathfrak m}\mathcal{O}_{X,x})$, where
            \[
                \rm gr_{\mathfrak m}\mathcal{O}_{X,x}=\bigoplus_{i\geq 0}\mathfrak{m}^i/\mathfrak{m}^{i+1}.
            \]
        \end{defn}
        In our case,
        \[
            \left(\mathcal{O}_{\mathcal{D}_A,\mathsf{F}},\mathfrak{m}\right)=\left(\left(\frac{\mathbb{C}[m_{012},\ldots,m_{345}]}{(f)}\right)_{(m_{012},\ldots,m_{345})},(m_{012},\ldots,m_{345})\right),
        \]
        where $f=f(m_{012},\ldots,m_{345})$ is the local equation of $\mathcal{D}_A$ at the point $\mathsf{F}$ and $(m_{ijk})_{0\leq i<j<k\leq 5}$ are the coordinates of $A$. Let us consider the Taylor expansion of $f$ at $\mathsf{F}$,
        \[
            f=f_0+f_1+f_2+f_3+\ldots
        \]
        where $f_i=f_i(m_{012},\ldots,m_{345})$, $\mathrm{deg}(f_i)=i$ and $f_i\in\mathfrak{m}^i/\mathfrak{m}^{i+1}$. Since the divisor $\mathcal{D}_A$ is singular at the Fermat and since the expansion is centered at the origin of $A$, we have that $f_0\equiv f_1\equiv 0$. Then,
        \[
            \mathbf{C}_{\mathsf F}(\mathcal D_A)=\{(m_{012},\ldots,m_{345})\in A \ \vert \ f_2(m_{012},\ldots,m_{345})=0\}\subset T
        \]
        is the tangent cone, which is the zero locus in $T$ of a homogeneous polynomial of degree 2. As a consequence of the remark \ref{automorphismsoftheFermat} and by construction of $\mathcal{D}_A$, it follows that $\mathbf{C}_{\mathsf F}(\mathcal D_A)$ is invariant under the action of the stabilizer of the Fermat polynomial. To explicitly write its equation we need to find all the quadric hypersurfaces of $T$ that are invariant with respect to the action of $(\mu_3)^5 \rtimes \mathfrak S_6$. Let
        \[
            Q=\sum_{\substack{0\leq i<j<k\leq 5 \\ 0\leq a<b<c\leq 5}} c_{(i,j,k,a,b,c)}\, m_{ijk}m_{abc}\in\mathrm{Sym}^2T
        \]
        be a general quadric in the variables of $T$. With the same notations of the remark \ref{automorphismsoftheFermat}, an element $g\in(\mu_3)^5\rtimes \mathfrak S_6$ acts on $Q$ in the following way:
        \[
            g \cdot Q = \sum_{\substack{0\leq i<j<k\leq 5 \\ 0\leq a<b<c\leq 5}}c_{(i,j,k,a,b,c)}\, (\zeta_i \zeta_j \zeta_k \zeta_a \zeta_b \zeta_c)\, m_{\sigma(i)\sigma(j)\sigma(k)}m_{\sigma(a)\sigma(b)\sigma(c)}.
        \]
        It is immediate to see that $Q=g\cdot Q$, for every $g\in(\mu_3)^5\rtimes \mathfrak S_6$, if and only if:
        \[
            c_{(i,j,k,a,b,c)}=c_{(\sigma(i),\sigma(j),\sigma(k),\sigma(a),\sigma(b),\sigma(c))} \quad \forall \sigma, \, \forall (i,j,k),(a,b,c)
        \]
        and
        \[
            \zeta_i \zeta_j \zeta_k \zeta_a \zeta_b \zeta_c=1, \quad \forall (i,j,k),(a,b,c).
        \]
        But these two conditions hold if and only if the quadric $Q$ is, up to multiplication by a scalar,
        \[
            Q=\sum\limits_{\substack{0\leq i<j<k\leq 5 \\ \{a,b,c\}=\{0,\ldots,5\}\setminus\{i,j,k\}}}m_{ijk}m_{abc}\, ,
        \]
        that is the sum of the 10 mixed products where the triples $(i,j,k)$ and $(a,b,c)$ are complementary. Its associated matrix has rank 20 since it is anti-diagonal with all the elements on the secondary diagonal equal to 1. Hence, $Q$ defines a smooth irreducible quadric in $T$ that is necessarily equal to the tangent cone $\mathbf{C}_{\mathsf F}(\mathcal{D}_A)$. Then, $f_2$ is a non-degenerate quadric and the only point in which $\nabla f_2=0$ is the origin of $A$, that is, the point $\mathsf F\in\mathcal D_A$. Hence, $\mathsf F$ is an isolated singular point of type $A_1$ for the divisor $\mathcal D_A$. It follows from \cite[II.8.23]{Hartshorne77} that the local ring $\mathcal{O}_{\mathcal{D}_A,\mathsf{F}}$ is normal and hence the normalization of $\mathcal{D}_A$ is an isomorphism in a neighborhood of $\mathsf{F}$. This shows that the normalization of the Severi divisor $\mathcal{D}_{sn}$ at $\mathsf f$ is not smooth implying that $\mathcal{D}_{sn}$ is not of Noether-Lefschetz type.

    \end{proof}

    \begin{rmk}
        Theorem \ref{maintheorem} implies that, for a very general cubic fourfold of the Severi divisor $\mathcal{D}_{sn}$, there is no nonzero Hodge class in $H^4(X,\mathbb{Q})_{prim}$. Indeed, it follows from \cite{Voisin13} that the Hodge loci distribute in a countable union of hypersurfaces of $\mathcal C$, as a consequence of the fact that $h^{3,1}(X)=1$ is an upper bound for the codimension of each irreducible component of these loci.
        
        Another important remark, that completes this description of the locus of cubic fourfolds having a special polar net, is the fact that the other four divisors of $\mathcal{C}$, defined on the basis of the other four irreducible components of the Severi variety $V_{6,10}(\mathbb{P}^2)$ as mentioned in the remark \ref{fourdivisors}, are also nonspecial. This can be shown with techniques similar to those used in the proof of Theorem \ref{maintheorem}. Hence, this gives five nonspecial irreducible divisors in the moduli space $\mathcal C$, thus obtaining new nonspecial divisors with respect to the three known and described so far in the literature.
    \end{rmk}

\end{document}